\theoremstyle{plain}
\newtheorem{thm}{Theorem}[section]
\newtheorem*{thm*}{Theorem}
\newtheorem{prop}[thm]{Proposition}
\newtheorem{lem}[thm]{Lemma}
\newtheorem{cor}[thm]{Corollary}
\theoremstyle{definition}
\theoremstyle{remark}
\newtheorem{rem}[thm]{Remark}
\renewcommand{\epsilon}{\varepsilon}
\newcommand{\vol}{\operatorname{vol}}
\newcommand{\ric}{\operatorname{Ric}}
\newcommand{\Div}{\operatorname{div}}
\newcommand{\tr}{\operatorname{tr}}
\newcommand{\cut}{\mathrm{Cut}\,}
\title[Liouville theorems for harmonic map heat flow]{Liouville theorems for harmonic map heat flow along ancient super Ricci flow via reduced geometry}
\author{Keita Kunikawa}
\address{Cooperative Faculty of Education, Utsunomiya University, 350 Mine-Machi, Utsunomiya, 321-8505, Japan}
\email{kunikawa@cc.utsunomiya-u.ac.jp}
\author{Yohei Sakurai}
\address{Department of Mathematics, Saitama University, 255 Shimo-Okubo, Sakura-ku, Saitama-City, Saitama, 338-8570, Japan}
\email{ysakurai@rimath.saitama-u.ac.jp}
\subjclass[2010]{Primary 53C44; Secondly 53C43}
\keywords{Super Ricci flow; Harmonic map heat flow; Liouville theorem; Gradient estimate}
\date{July 31, 2021}
\begin{document}
\maketitle

\begin{abstract}
We study harmonic map heat flow along ancient super Ricci flow,
and derive several Liouville theorems with controlled growth from Perelman's reduced geometric viewpoint.
For non-positively curved target spaces, our growth condition is sharp. 
For positively curved target spaces,
our Liouville theorem is new even in the static case (i.e., for harmonic maps);
moreover, we point out that
the growth condition can be improved, and almost sharp in the static case.
This fills the gap between the Liouville theorem of Choi and the example constructed by Schoen-Uhlenbeck. 
\end{abstract}

\section{Background}

This is a continuation of \cite{KuS} on Liouville theorems for heat equation along ancient super Ricci flow.
The aim of this paper is to generalize the target spaces,
and formulate Liouville theorems for harmonic map heat flow.

\subsection{Ancient super Ricci flow}\label{sec:Ancient super Ricci flow}
A smooth manifold $(M,g(t))_{t\in I}$ with a time-dependent Riemannian metric is called \textit{Ricci flow} when
\begin{equation*}
\partial_{t}g= -2 \ric,
\end{equation*}
which has been introduced by Hamilton \cite{H1}.
A supersolution to this equation is called super Ricci flow.
Namely,
$(M,g(t))_{t\in I}$ is called \textit{super Ricci flow} if
\begin{equation*}\label{eq:sRF}
\partial_{t}g\geq -2 \ric,
\end{equation*}
which has been introduced by McCann-Topping \cite{MT} from the viewpoint of optimal transport theory.
Recently,
the super Ricci flow has begun to be investigated from various perspectives, especially metric measure geometry (see e.g., \cite{B1}, \cite{B2}, \cite{HN}, \cite{K}, \cite{KS1}, \cite{KS2}, \cite{LL1}, \cite{LL2}, \cite{LL3}, \cite{LL4}, \cite{LL5}, \cite{LL6}, \cite{S}).
A Ricci flow $(M,g(t))_{t\in I}$ is said to be \textit{ancient} when $I=(-\infty,0]$,
which is one of the crucial concepts in singular analysis of Ricci flow.
In the present paper,
we will focus on ancient super Ricci flow.

\subsection{Liouville theorems for ancient solutions to heat equation}\label{sec:heat eq}
The celebrated Yau's Liouville theorem states that
on a complete manifold of non-negative Ricci curvature,
any positive harmonic functions must be constant.
One of the natural research directions is to generalize his Liouville theorem for ancient solutions to heat equation
\begin{equation*}
\partial_t u=\Delta u.
\end{equation*}
Souplet-Zhang \cite{SZ} have proven the following parabolic analogue (see \cite[Theorem 1.2]{SZ}):
\begin{thm}[\cite{SZ}]\label{thm:SZ}
Let $(M,g)$ be a complete Riemannian manifold of non-negative Ricci curvature.
Then we have the following:
\begin{enumerate}\setlength{\itemsep}{+0.7mm}
\item Let $u:M\times (-\infty,0]\to (0,\infty)$ be a positive ancient solution to the heat equation.
If
\begin{equation*}
u(x,t)=\exp\left[o\left(d(x)+\sqrt{\vert t\vert}\right)\right]
\end{equation*}
near infinity, then $u$ must be constant.
Here $d(x)$ denotes the Riemannian distance from a fixed point; \label{enum:positive growth}
\item let $u:M\times (-\infty,0]\to \mathbb{R}$ be an ancient solution to the heat equation.
If
\begin{equation*}
u(x,t)=o\left(d(x)+\sqrt{\vert t\vert}\right)
\end{equation*}
near infinity, then $u$ is constant. \label{enum:usual growth}
\end{enumerate}
\end{thm}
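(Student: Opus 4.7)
The plan is to prove Theorem~\ref{thm:SZ} via a Souplet--Zhang type local logarithmic gradient estimate, combined with a letting-$R\to\infty$ argument that consumes the growth hypothesis. I will first treat the positive case (1), and then reduce (2) either to (1) or to an analogous estimate for sign-changing solutions.

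\textbf{Step 1 (Local gradient estimate).} Fix a reference point $x_0\in M$ and consider the parabolic cylinder $Q_{R,\tau}:=B(x_0,R)\times[-\tau,0]$. Let $M_{R,\tau}:=\sup_{Q_{R,\tau}}u$ and set $w:=\log(u/(2M_{R,\tau}))$, so that $w<0$ on $Q_{R,\tau}$. A direct computation shows $w$ satisfies $(\partial_t-\Delta)w=|\nabla w|^2$. Introduce the auxiliary function
\[
F:=\frac{|\nabla w|^{2}}{(1-w)^{2}}.
\]
Applying the Bochner formula, the non-negativity of Ricci curvature, and the evolution equation for $w$, a standard manipulation yields a differential inequality of the form
\[
(\partial_t-\Delta)F \;\le\; -2\langle\nabla w,\nabla F\rangle \;-\; \alpha\,(1-w)\,F^{2} \;+\; \text{(lower order)}
\]
for some $\alpha>0$. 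The aim of this step is to arrive at such an inequality in a clean form.

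\textbf{Step 2 (Cutoff and maximum principle).} Take a spatial-temporal cutoff $\eta(x,t)=\phi(d(x,x_0)/R)\psi(t/\tau)$ which is $1$ on $Q_{R/2,\tau/2}$ and vanishes outside $Q_{R,\tau}$, with standard derivative bounds. Apply the maximum principle to $\eta F$ on $Q_{R,\tau}$. At an interior maximum point one has $\nabla(\eta F)=0$ and $(\partial_t-\Delta)(\eta F)\ge 0$, which, after using the differential inequality for $F$ and the Laplacian comparison $\Delta d\le (n-1)/d$ (valid on the complement of the cut locus; handled at the cut locus via the Calabi trick), gives the estimate
\[
\frac{|\nabla u|}{u}(x,t) \;\le\; C_{n}\!\left(\frac{1}{R}+\frac{1}{\sqrt{\tau}}\right)\!\left(1+\log\frac{M_{R,\tau}}{u(x,t)}\right)
\qquad \text{on } Q_{R/2,\tau/2}.
\]
The main obstacle I expect here is bookkeeping in the Bochner calculation and keeping track of all the cutoff derivative terms so that they all absorb into the $1/R$ and $1/\sqrt{\tau}$ factors; the cut locus requires a moment of care but is routine.

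\textbf{Step 3 (Consuming the growth hypothesis).} Fix $(x,t)$. For any $R>2d(x)$ and $\tau>2|t|$, the growth assumption $u=\exp[o(d+\sqrt{|t|})]$ gives $\log M_{R,\tau}=o(R+\sqrt{\tau})$, hence
\[
\left(\frac{1}{R}+\frac{1}{\sqrt{\tau}}\right)\!\left(1+\log\frac{M_{R,\tau}}{u(x,t)}\right)\;\longrightarrow\;0
\quad\text{as }R,\tau\to\infty.
\]
Thus $|\nabla u|(x,t)=0$ for every $(x,t)$, so $u$ is spatially constant, and the heat equation then forces $u$ to be constant in $t$ as well. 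This proves (1).

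\textbf{Step 4 (Signed case).} For (2), I would run a parallel argument replacing $F$ by $|\nabla u|^{2}/(A-u)^{2}$ with $A:=1+\sup_{Q_{R,\tau}}u$, or equivalently reduce to (1) by applying it to $\tilde u:=u-\inf_{Q_{R,\tau}}u+1>0$, which is an ancient positive solution on the same cylinder. The sublinear growth $u=o(d+\sqrt{|t|})$ gives $\sup_{Q_{R,\tau}}|u|=o(R+\sqrt{\tau})$, which is exactly what is needed so that the resulting estimate (either $|\nabla u|\le C(1/R+1/\sqrt{\tau})\sup|u|$, or the logarithmic version applied to $\tilde u$) again forces $\nabla u\equiv 0$ in the limit. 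The one subtlety is that $\log(M_{R,\tau}/\tilde u(x,t))$ involves the oscillation of $u$ rather than $u$ itself, but the sublinear growth controls the oscillation exactly at the right rate.
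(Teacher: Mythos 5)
Your proposal is correct and is essentially the argument of Souplet--Zhang that the paper invokes: Theorem~\ref{thm:SZ} is quoted from \cite{SZ} without proof, and both the original proof and the paper's own generalizations (Theorems~\ref{thm:gradient1} and \ref{thm:gradient2}, which use a weight of the form $(A-\cdot)^{-2}$ exactly as in your Step~4) follow your scheme of a local gradient estimate via Bochner, cutoff and maximum principle, then letting $R,\tau\to\infty$ to consume the growth hypothesis. The only detail worth writing out in Step~4 is why the logarithmic estimate applied to $\tilde u=u-\inf u+1$ yields a linear bound: since $s\mapsto s\left(1+\log(M'/s)\right)$ is increasing on $(0,M']$, one gets $|\nabla u|\le C\left(\tfrac1R+\tfrac1{\sqrt{\tau}}\right)M'$ with $M'=\mathrm{osc}\,u+1=o(R+\sqrt{\tau})$, which closes the argument.
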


The growth conditions in Theorem \ref{thm:SZ} are known to be sharp in the spatial direction (see \cite{SZ}, and cf. \cite{DD}). 
As mentioned in \cite[Section 1]{KuS},
one of the next research directions is the following:
For an ancient super Ricci flow $(M,g(t))_{t\in (-\infty,0]}$,
the problem is to find suitable growth conditions for a solution $u:M\times (-\infty,0]\to \mathbb{R}$ to heat equation such that $u$ must become constant.
In other words,
for the reverse time parameter
\begin{equation*}
\tau:=-t,
\end{equation*}
and for a backward super Ricci flow $(M,g(\tau))_{\tau \in [0,\infty)}$,
namely,
\begin{equation*}
\ric\geq \frac{1}{2}\partial_{\tau}g,
\end{equation*}
the problem is to find suitable conditions for a solution $u:M\times [0,\infty)\to \mathbb{R}$ to backward heat equation
\begin{equation*}
(\Delta+\partial_{\tau}) u=0
\end{equation*}
such that $u$ must become constant.
Guo-Philipowski-Thalmaier \cite{GPT2} have provided an approach to this problem from stochastic analytic viewpoint,
and obtained a Liouville theorem under a growth condition for entropy (see \cite[Theorem 2]{GPT2}).
On the other hand,
the authors \cite{KuS} have approached the problem from Perelman's reduced geometric viewpoint (cf. \cite{P}),
and established a Liouville theorem under a growth condition concerning reduced distance.

Now,
let us recall the precise statement of the Liouville theorem in \cite{KuS}.
To do so,
we fix some notations on a complete, time-dependent Riemannian manifold $(M,g(\tau))_{\tau \in [0,\infty)}$,
which is not necessarily backward super Ricci flow.
We put
\begin{equation*}\label{eq:hH}
h:=\frac{1}{2}\partial_{\tau}g,\quad H:=\tr h.
\end{equation*}

We begin with recalling the notion of reduced distance (more precisely, see Subsection \ref{sec:Preliminaries}).
For $(x,\tau)\in M\times (0,\infty)$,
let $L(x,\tau)$ be the \textit{$L$-distance} from a space-time base point $(x_0 ,0)$,
which is defined as the infimum of the so-called \textit{$\mathcal{L}$-length} over all curves $\gamma:[0,\tau]\to M$ with $\gamma(0)=x_0$ and $\gamma(\tau)=x$.
Then the \textit{reduced distance $\ell(x,\tau)$} is defined as
\begin{equation*}\label{eq:reduced distance}
\ell(x,\tau):=\frac{1}{2\sqrt{\tau}}L(x,\tau).
\end{equation*}
We say that
$(M,g(\tau))_{\tau \in [0,\infty)}$ is \textit{admissible} if
for every $\tau>0$ there is $c_{\tau}\geq 0$ depending only on $\tau$ such that $h \geq -c_{\tau} g$ on $[0,\tau]$.
The admissibility guarantees that
the $L$-distance is achieved by a minimal $\mathcal{L}$-geodesic.

Next,
for a (time-dependent) vector field $V$,
we recall the following \textit{M\"uller quantity} $\mathcal{D}(V)$ (see \cite[Definition 1.3]{M}),
and \textit{trace Harnack quantity} $\mathcal{H}(V)$ (see \cite{H2}, \cite[Definition 1.5]{M}):
\begin{align*}
\mathcal{D}(V)&:=-\partial_{\tau}H-\Delta H-2\Vert h \Vert^2+4\Div h(V)-2g(\nabla H,V)+2\ric(V,V)-2h(V,V),\\
\mathcal{H}(V)&:=-\partial_{\tau} H-\frac{H}{\tau}-2g(\nabla H,V)+2h(V,V).
\end{align*}

The main result in \cite{KuS} can be stated as follows (see \cite[Theorem 2.2]{KuS}):
\begin{thm}[\cite{KuS}]\label{thm:main result in KuS}
Let $(M,g(\tau))_{\tau \in [0,\infty)}$ be an admissible, complete backward super Ricci flow.
We assume
\begin{equation*}
\mathcal{D}(V)\geq 0,\quad \mathcal{H}(V) \geq -\frac{H}{\tau},\quad H\geq 0
\end{equation*}
for all vector fields $V$.
Then we have the following:
\begin{enumerate}\setlength{\itemsep}{+0.7mm}
\item Let $u:M\times [0,\infty)\to (0,\infty)$ be a positive solution to backward heat equation.
If
\begin{equation*}\label{eq:positive growth}
u(x,\tau)=\exp\left[o\left(\mathfrak{d}(x,\tau)+\sqrt{\tau}\right)\right]
\end{equation*}
near infinity, then $u$ is constant.
Here $\mathfrak{d}(x,\tau)$ is defined by
\begin{equation*}
\mathfrak{d}(x,\tau):=\sqrt{4\tau\,\ell(x,\tau)};
\end{equation*}
\item let $u:M\times [0,\infty)\to \mathbb{R}$ be a solution to backward heat equation.
If
\begin{equation*}\label{eq:usual growth}
u(x,\tau)=o\left(\mathfrak{d}(x,\tau)+\sqrt{\tau}\right)
\end{equation*}
near infinity, then $u$ is constant. 
\end{enumerate}
\end{thm}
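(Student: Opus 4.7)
The plan is to replicate the Souplet--Zhang proof of Theorem~\ref{thm:SZ}, with the Riemannian distance $d(x)$ replaced throughout by the reduced-distance scale $\mathfrak{d}(x,\tau)=\sqrt{4\tau\,\ell(x,\tau)}$. Concretely, I would (i) establish local gradient estimates for solutions of the backward heat equation on parabolic ``reduced'' regions $Q_R:=\{(x,\tau)\in M\times[0,R^{2}] : \mathfrak{d}(x,\tau)\leq R\}$, and then (ii) send $R\to\infty$, using the growth hypothesis to force $|\nabla u|\equiv 0$ and, via the equation, $\partial_\tau u\equiv 0$.

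\textbf{Reduced geometry under the hypotheses.} The three structural assumptions on $\mathcal{D}(V)$, $\mathcal{H}(V)$ and $H$ are precisely those under which Perelman's fundamental first and second variation inequalities for $\ell$ extend from Ricci flow to an admissible backward super Ricci flow (via the M\"uller formalism). I would first record, in a barrier sense away from the $\mathcal L$-cut locus, differential upper bounds of the form $\partial_\tau \ell+\Delta\ell\leq C\ell/\tau$ and $|\nabla\ell|^{2}\leq C\ell/\tau$, together with the identities $\partial_\tau \mathfrak{d}^{2}=2\tau(\partial_\tau\ell+\ell/\tau)$ and $|\nabla \mathfrak{d}|^{2}=4\tau|\nabla\ell|^{2}/(\cdot)$. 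These yield a space--time Laplacian comparison for $\mathfrak{d}$, which is exactly what is needed to build a smooth cutoff $\phi(x,\tau):=\eta(\mathfrak{d}(x,\tau)/R)$ (with $\eta$ a standard bump) whose $|\nabla \phi|$, $\Delta\phi$ and $|\partial_\tau\phi|$ are controlled by $R^{-1}$ together with an appropriate $\tau^{-1/2}$ factor on $Q_R$.

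\textbf{Local gradient estimates.} With the cutoff in hand I would, as in \cite{SZ}, derive via Bochner and the parabolic maximum principle the following estimates on $Q_{R/2}$. For part (1), applied to $w:=\log u$ with $A:=1+\log \sup_{Q_R} u$ and auxiliary $f=|\nabla w|^{2}/(1-w/A)^{2}$, one obtains
\begin{equation*}
\frac{|\nabla u|}{u}(x,\tau)\;\leq\; C\!\left(\frac{1}{R}+\frac{1}{\sqrt{\tau}}\right)\!\left(1+\log\frac{\sup_{Q_R}u}{u(x,\tau)}\right).
\end{equation*}
For part (2), a Hamilton-type estimate applied to $f=|\nabla u|^{2}/(2A-u)^{2}$ with $A:=\sup_{Q_R}|u|$ gives
\begin{equation*}
\frac{|\nabla u|}{2A-u}(x,\tau)\;\leq\; C\!\left(\frac{1}{R}+\frac{1}{\sqrt{\tau}}\right).
\end{equation*}
In both computations the Ricci-curvature error coming out of Bochner is absorbed by the super Ricci flow inequality $2\ric\geq \partial_\tau g$, while the reduced-distance inequalities above control the terms produced by $\phi$. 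Fixing $(x,\tau)$ and letting $R\to\infty$, the growth assumptions make the right-hand sides vanish (logarithmically in case (1), linearly in case (2)), forcing $\nabla u(x,\tau)=0$ and hence $u\equiv \mathrm{const}$.

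\textbf{Main obstacle.} The technical core that I expect to be hardest is the cutoff construction. Unlike the Riemannian distance, $\mathfrak{d}(\cdot,\tau)$ is only Lipschitz and its Laplacian bound is available only in the barrier sense, with the $\mathcal L$-cut locus and the $\tau\to 0^{+}$ degeneracy to navigate; moreover one needs simultaneous spatial and temporal control of $\phi$. Pushing the Calabi-type argument through so that the maximum principle applies rigorously is what genuinely uses the full strength of the three assumptions $\mathcal D(V)\geq 0$, $\mathcal H(V)\geq -H/\tau$ and $H\geq 0$ together with the admissibility condition guaranteeing minimal $\mathcal L$-geodesics.
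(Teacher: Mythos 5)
Your proposal follows essentially the same route as the paper (and as \cite{KuS}, where this theorem is actually proved): the same reduced-geometric inequalities (Lemmas \ref{lem:reduced heat estimate2} and \ref{lem:reduced gradient estimate} are precisely your barrier bounds for $(\Delta+\partial_\tau)\overline{L}$ and $\Vert\nabla\mathfrak{d}\Vert^2$), the same space--time cutoff $\psi(\mathfrak{d}(x,\tau),\tau)$ on $Q_{R,R^2}$ handled by a Calabi-type maximum-point argument, the same Souplet--Zhang auxiliary quantities, and the same passage $R\to\infty$ under the growth hypothesis. One correction: the time term in your local gradient estimates should be $1/\sqrt{T}$ (the temporal extent of the cylinder, here $T=R^2$), not $1/\sqrt{\tau}$ evaluated at the point; as written the right-hand side would not vanish at a fixed $(x,\tau)$ when $R\to\infty$.
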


In the static case of $h=0$,
Theorem \ref{thm:main result in KuS} is reduced to Theorem \ref{thm:SZ} (see \cite[Remark 2.3]{KuS}).

\section{Main results}

\subsection{Liouville theorems for ancient solutions to harmonic map heat flow}\label{sec:Liouville theorems for HMHF}
One can now consider the following problem:
For a backward super Ricci flow $(M,g(\tau))_{\tau \in [0,\infty)}$,
and a manifold $(N,\mathfrak{g})$ with an upper sectional curvature bound,
the problem is to find suitable conditions for a solution $u:M\times [0,\infty)\to N$ to backward harmonic map heat flow
\begin{equation}\label{eq:bHMHF}
(\Delta+\partial_{\tau}) u=0
\end{equation}
such that $u$ must be constant.
Here $\Delta$ is the tension field.
Guo-Philipowski-Thalmaier \cite{GPT2} have approached this problem from stochastic analytic viewpoint,
and produced various Liouville theorems (see \cite[Section 4]{GPT1}).
We here aim to approach the problem from Perelman's reduced geometric viewpoint.
Our first main result is the following Liouville theorem of Cheng type (cf. \cite{Che}):
\begin{thm}\label{thm:main1}
Let $(M,g(\tau))_{\tau \in [0,\infty)}$ be an admissible, complete backward super Ricci flow.
We assume
\begin{equation}\label{eq:main assumption}
\mathcal{D}(V)\geq 0,\quad \mathcal{H}(V) \geq -\frac{H}{\tau},\quad H\geq 0
\end{equation}
for all vector fields $V$.
Let $(N,\mathfrak{g})$ be a complete, simply connected Riemannian manifold with $\sec \leq 0$.
Let $u:M\times [0,\infty)\to N$ be a solution to backward harmonic map heat flow.
If
\begin{equation*}
\rho(u(x,\tau))=o\left(\mathfrak{d}(x,\tau)+\sqrt{\tau}\right)
\end{equation*}
near infinity, then $u$ is constant.
Here $\rho:N\to \mathbb{R}$ is the Riemannian distance function from a fixed point $y_0 \in N$.
\end{thm}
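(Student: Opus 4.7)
The plan is to adapt the local Bernstein-type gradient argument in reduced geometry from \cite{KuS} to the harmonic-map setting, with the nonpositive target curvature entering through composition with the squared distance on $N$.

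\textbf{Reduction via composition.} Set $F := \rho^2/2 : N \to [0,\infty)$. Since $(N,\mathfrak{g})$ is complete and simply connected with $\sec\leq 0$, Hessian comparison on Cartan--Hadamard manifolds gives $\Hess_N F \geq \mathfrak{g}$ globally. Composing with $u$ and using \eqref{eq:bHMHF} to cancel the tension-field contribution, the standard chain rule for the heat operator yields
\[
(\Delta+\partial_\tau)(F\circ u) \;=\; \tr_{g(\tau)}\!\bigl(\Hess_N F\bigr)(du,du) \;\geq\; |du|^2_{g(\tau)}.
\]
Thus $v := F\circ u$ is a nonnegative subsolution of the backward heat equation whose excess $(\Delta+\partial_\tau)v$ pointwise dominates the energy density $|du|^2$.

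\textbf{Bernstein-type estimate and passage to infinity.} For $R > 0$, let $P_R := \{(x,\tau) : \mathfrak{d}(x,\tau)+\sqrt\tau \leq R\}$ and $M_R := \sup_{P_R}(\rho\circ u)$. Take a cut-off $\phi$ supported in $P_R$, equal to $1$ on $P_{R/2}$, with $|\nabla \phi|^2$ and $|\partial_\tau \phi|$ of order $1/R^2$; such $\phi$ is built from $\mathfrak{d}$ as in \cite{KuS}, using the Laplacian comparison for $\ell$ available under \eqref{eq:main assumption}. Run a maximum-principle argument modeled on the one in \cite{KuS} on an auxiliary quantity of the form $\phi^2\,|du|^2/(M_R^2-v)$, combining the composition inequality above with the Bochner formula for $|du|^2$ along backward harmonic map heat flow in a time-dependent metric: $\sec_N\leq 0$ gives the good sign for the target-curvature term, the super Ricci flow condition controls the domain Ricci term, and $H\geq 0$, $\mathcal{D}(V)\geq 0$, $\mathcal{H}(V)\geq -H/\tau$ from \eqref{eq:main assumption} control the metric-evolution terms. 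This delivers a pointwise gradient bound
\[
|du|(x_0,\tau_0) \;\leq\; \frac{C\,M_R}{R}\qquad\text{for } (x_0,\tau_0)\in P_{R/2}.
\]
The growth hypothesis $\rho\circ u = o(\mathfrak{d}+\sqrt\tau)$ forces $M_R = o(R)$, so sending $R\to\infty$ at any fixed space-time point yields $|du|(x_0,\tau_0)=0$. Hence $u$ is spatially constant, and \eqref{eq:bHMHF} then gives $\partial_\tau u = -\Delta u = 0$, so $u$ is constant.

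\textbf{Main obstacle.} The crux is the Bernstein-type estimate: the Bochner formula for $|du|^2$ along harmonic map heat flow in a time-dependent metric carries error terms from three sources (domain Ricci, target sectional curvature, metric evolution), and it must be combined with both the reduced-distance Laplacian comparison from \cite{KuS} and the composition inequality above. The delicate point is the choice of auxiliary function so that every error term has either the good sign or can be absorbed into $\phi^2|du|^2$ on the left---which is exactly what the three hypotheses in \eqref{eq:main assumption} together with $\sec_N\leq 0$ arrange.
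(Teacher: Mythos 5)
Your proposal is correct and follows essentially the same route as the paper: the Hessian comparison applied to $\rho^2\circ u$ on the Cartan--Hadamard target, the Bochner formula for $\Vert du\Vert^2$, a Bernstein-type quotient localized by a cut-off built from $\mathfrak{d}$, a maximum-principle (Calabi) argument using the reduced-distance estimates under \eqref{eq:main assumption}, and finally $R\to\infty$ via the growth condition. The only cosmetic differences are the power of the denominator in the auxiliary quantity (the paper uses $\Vert du\Vert^2/(A^2-\rho^2\circ u)^2$) and the shape of the parabolic region.
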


When $N=\mathbb{R}$,
Theorem \ref{thm:main1} is nothing but Theorem \ref{thm:main result in KuS}.
In the static case of $h=0$,
Theorem \ref{thm:main1} has been proved by Wang \cite{W} (see \cite[Theorem 1.3]{W}). Since growth conditions in these results are sharp in the spatial direction, so is the growth condition in Theorem \ref{thm:main1}. 

We also prove the following result for positively curved target spaces:
\begin{thm}\label{thm:main2}
Let $(M,g(\tau))_{\tau \in [0,\infty)}$ be an admissible, complete backward super Ricci flow.
We assume
\begin{equation}\label{eq:main assumption2}
\mathcal{D}(V)\geq 0,\quad \mathcal{H}(V) \geq -\frac{H}{\tau},\quad H\geq 0
\end{equation}
for all vector fields $V$.
Let $(N,\mathfrak{g})$ be a complete Riemannian manifold with $\sec \leq \kappa$ for $\kappa>0$.
Assume that an open geodesic ball $B_{\pi/2\sqrt{\kappa}}(y_0)$ of radius $\pi/2\sqrt{\kappa}$ centered at $y_0$ in $N$ does not meet the cut locus $\cut(y_0)$ of $y_0$.
Let $u:M\times [0,\infty)\to N$ be a solution to backward harmonic map heat flow.
If the image of $u$ is contained in $B_{\pi/2\sqrt{\kappa}}(y_0)$,
and if $u$ satisfies
\begin{equation*}\label{eq:growth main2}
\frac{1}{\cos \sqrt{\kappa}\rho(u(x,\tau))}=o\left(\mathfrak{d}(x,\tau)^{1/2}+\tau^{1/4}\right)
\end{equation*}
near infinity, then $u$ is constant.
\end{thm}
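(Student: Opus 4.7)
The plan is to reduce to a scalar subsolution Liouville statement by pulling back a natural convex function on the target, then invoking the reduced-geometry Souplet--Zhang machinery of \cite{KuS}. Set
\[
v(x,\tau) := \sec\bigl(\sqrt{\kappa}\,\rho(u(x,\tau))\bigr),
\]
which is smooth with $v\geq 1$ since the image of $u$ lies in $B_{\pi/2\sqrt{\kappa}}(y_0)\setminus\cut(y_0)$. Writing $F:=\sec(\sqrt{\kappa}\rho(\cdot)):N\to[1,\infty)$ and using the Hessian comparison $\Hess\rho\geq\sqrt{\kappa}\cot(\sqrt{\kappa}\rho)(\mathfrak{g}-d\rho\otimes d\rho)$ coming from $\sec\leq\kappa$ (valid on $B_{\pi/2\sqrt{\kappa}}(y_0)$), a short computation in the radial and tangential directions yields the tensor inequality $\Hess_N F \geq \kappa F\,\mathfrak{g}$ on this ball. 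Composing with $u$, the chain rule together with \eqref{eq:bHMHF} gives
\[
(\Delta+\partial_\tau)v \;=\; \tr_g\Hess_N F(du,du) \;\geq\; \kappa\,v\,|du|^2 \;\geq\; 0,
\]
so $v$ is a positive subsolution of the backward heat equation.

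Next I would run a Souplet--Zhang-type argument for $v$ along the backward super Ricci flow, in direct parallel with the proof of Theorem \ref{thm:main result in KuS}. Concretely, the plan is to derive a Bochner inequality for (a suitable power of) $|\nabla v|/v$, multiply by a spacetime cutoff $\phi(\mathfrak{d}/R)\,\psi(\tau/T)$ built from the reduced distance, and evaluate at a spacetime maximum point. The assumptions $\mathcal{D}(V)\geq 0$, $\mathcal{H}(V)\geq -H/\tau$ and $H\geq 0$ enter at precisely the same step as in \cite{KuS}: they produce Hamilton--Perelman-type differential inequalities for $\mathfrak{d}$ (controlling $|\nabla\mathfrak{d}|$ and $(\Delta+\partial_\tau)\mathfrak{d}$) that render the cutoff error terms tractable, while the nonnegative surplus $\kappa v|du|^2$ arising from the subsolution step is only favorable in the maximum-principle argument.

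The growth hypothesis $v=o\bigl(\mathfrak{d}^{1/2}+\tau^{1/4}\bigr)$ reads, after squaring, $v^2=o(\mathfrak{d}+\sqrt{\tau})$, which matches the growth threshold appearing in Theorem \ref{thm:main result in KuS}(2); this is exactly what is needed to make the boundary/cutoff error terms vanish as $R,T\to\infty$ and thereby force $\nabla v\equiv 0$. Substituting spatial constancy of $v$ back into $(\Delta+\partial_\tau)v\geq\kappa v|du|^2$ together with $v\geq 1$ immediately yields $|du|\equiv 0$, so $u$ is constant. The main obstacle will be executing the Bochner computation and cutoff estimate cleanly in the time-dependent metric setting so that the output gradient bound takes precisely the form $|\nabla v|\lesssim v\cdot(\mathfrak{d}^{-1/2}+\tau^{-1/4})$ (rather than a logarithmic analogue of Theorem \ref{thm:main result in KuS}(1)); in particular, one must track how the radial part of $\Hess_N F$, which grows cubically in $v$, couples with the tension field, and verify that it does not overwhelm the Souplet--Zhang closure when $v$ is not a priori bounded. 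This is also where the exponent $1/2$ in the growth condition is decided.
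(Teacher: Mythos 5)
Your opening observation is correct and is indeed the germ of the paper's argument: the Hessian comparison under $\sec\leq\kappa$ makes $F=1/\cos(\sqrt{\kappa}\rho)$ (equivalently $\varphi=1-\cos\sqrt{\kappa}\rho$) strictly convex on $B_{\pi/2\sqrt{\kappa}}(y_0)$, so $v=F\circ u$ satisfies $(\Delta+\partial_\tau)v\geq\kappa v\Vert du\Vert^2\geq 0$. But the reduction to ``run the Souplet--Zhang/reduced-geometry machinery on the scalar subsolution $v$'' has a genuine gap. Theorem \ref{thm:main result in KuS} and its proof apply to \emph{solutions} of the backward heat equation: the Bochner computation for $\Vert\nabla\log v\Vert^2$ (or $\Vert\nabla v\Vert/v$) differentiates the equation, so the surplus term $\kappa v\Vert du\Vert^2$ contributes terms of the form $\nabla(\kappa\Vert du\Vert^2)$ whose sign is not controlled. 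Your claim that the surplus is ``only favorable in the maximum-principle argument'' is exactly the point that fails: a nonnegative source term in the equation is not a nonnegative term in the evolution inequality of the gradient quantity. Moreover, even granting the conclusion $\nabla v\equiv 0$, the final step does not close: $\nabla v=0$ only says $du$ is tangent to the geodesic spheres about $y_0$, and the resulting inequality $\partial_\tau v\geq\kappa v\Vert du\Vert^2$ with $v=v(\tau)$ nondecreasing and possibly unbounded (the growth condition permits $v(\tau)\sim\tau^{1/5}$, say) does not force $\Vert du\Vert\equiv 0$.

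The paper avoids both problems by never discarding the energy density: the estimated quantity is $w=\Vert du\Vert^2/(A-\varphi\circ u)^2$ (Theorem \ref{thm:gradient2}), where $A-\varphi\circ u$ is comparable to $1/v$. The Eells--Sampson Bochner formula for $\Vert du\Vert^2$ (Lemma \ref{lem:easy}), whose curvature term under $\sec\leq\kappa$ costs $-2\kappa\Vert du\Vert^4$, is played off against the convexity surplus $(\Delta+\partial_\tau)(\varphi\circ u)\geq\kappa\cos(\sqrt{\kappa}\rho\circ u)\Vert du\Vert^2$ to produce the good term $2\kappa(1-A)(A-\varphi\circ u)w^2$ in Lemma \ref{lem:simple2}; this quadratic term is what the maximum principle with the reduced-distance cutoff (Lemmas \ref{lem:reduced heat estimate2} and \ref{lem:reduced gradient estimate}) absorbs, and its coefficient $(1-A)(A-\varphi\circ u)\sim\cos^2(\sqrt{\kappa}\rho\circ u)$ is precisely where the factor $\sup(1/\cos\sqrt{\kappa}\rho\circ u)^2$, hence the exponent $1/2$ in the growth condition, comes from. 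If you want to salvage your strategy, you must couple $v$ with $\Vert du\Vert^2$ in a single quotient rather than estimating $\nabla v$ alone; that is essentially what the paper's $w$ does.
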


Theorems \ref{thm:main1} and \ref{thm:main2} follow from local gradient estimates (see Theorems \ref{thm:gradient1} and \ref{thm:gradient2}).

\subsection{Sharpness}
Let us discuss the sharpness concerning Theorem \ref{thm:main2}.
To do so,
we recall the Liouville theorem of Choi \cite{Cho} (see \cite[Theorem]{Cho}):
\begin{thm}[\cite{Cho}]\label{thm:choi}
Let $(M,g)$ be a complete Riemannian manifold of non-negative Ricci curvature,
and let $(N,\mathfrak{g})$ be a complete Riemannian manifold with $\sec\leq \kappa$ for $\kappa>0$. 
Let $u:M \to N$ be a harmonic map $($i.e., $\Delta u=0$$)$.
We assume that $B_{L}(y_0)$ is a regular $($i.e., $L\in (0, \pi/2\sqrt{\kappa})$ and $B_{L}(y_0)\cap \cut(y_0)=\emptyset$$)$, open geodesic ball.
If the image of $u$ is contained in $B_{L}(y_0)$,
then $u$ is constant. 
\end{thm}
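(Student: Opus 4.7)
The plan is to convert this harmonic map statement into a scalar Liouville question on $M$ by composing $u$ with a concave auxiliary function on the target ball. Since $B_L(y_0)$ is regular (so $\rho$ is smooth on it) and $\sqrt{\kappa}L<\pi/2$, the function
\[
\phi:=\cos\bigl(\sqrt{\kappa}\,\rho\bigr):B_L(y_0)\to(0,1]
\]
is smooth and bounded below by $\cos(\sqrt{\kappa}L)>0$. Applying the Hessian comparison theorem under $\sec\leq\kappa$ on the regular ball, a direct computation from $\Hess\rho\geq \sqrt{\kappa}\cot(\sqrt{\kappa}\rho)(\mathfrak{g}-d\rho\otimes d\rho)$ yields pointwise on $B_L(y_0)$
\[
\Hess\phi\leq -\kappa\,\phi\,\mathfrak{g}.
\]

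Next I would apply the chain rule for the tension field: for any smooth $\phi$ on $N$ and any harmonic map $u:M\to N$,
\[
\Delta(\phi\circ u)=\Hess\phi(du,du)+d\phi(\tau(u))=\Hess\phi(du,du),
\]
since $\tau(u)=\Delta u=0$. Combining with the Hessian bound above,
\[
\Delta(\phi\circ u)\leq -\kappa\,(\phi\circ u)\,|du|^{2}\leq 0,
\]
so that $\phi\circ u$ is a positive superharmonic function on $M$, pinched between $\cos(\sqrt{\kappa}L)$ and $1$. Equivalently, $-\phi\circ u$ is a subharmonic function on $M$ that is bounded from above.

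At this point I would invoke the classical Liouville theorem of Yau in the form: on a complete Riemannian manifold with $\ric\geq 0$, every subharmonic function bounded from above is constant (a consequence of the Cheng--Yau gradient estimate). Thus $\phi\circ u$ is constant and $\Delta(\phi\circ u)\equiv 0$. Plugging this back into the displayed inequality, together with $\phi\circ u>0$ and $\kappa>0$, forces $|du|\equiv 0$, so $u$ must be constant.

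The step I expect to be the main input is the Liouville-type statement for bounded-above subharmonic functions on $(M,g)$: Yau's original theorem concerns bounded harmonic functions, so one needs the slightly stronger form (or, equivalently, the Cheng--Yau gradient estimate applied to the positive superharmonic $\phi\circ u$, or to its reciprocal $w:=1/(\phi\circ u)$, which is bounded subharmonic) to close the argument. The Hessian comparison step, while essential, is routine once both the regularity condition $B_L(y_0)\cap\cut(y_0)=\emptyset$ and the size restriction $\sqrt{\kappa}L<\pi/2$ have been used to guarantee that $\phi$ remains smooth and strictly positive on the target ball.
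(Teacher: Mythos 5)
Your reduction is correct up to the point where you conclude that $\phi\circ u=\cos(\sqrt{\kappa}\,\rho\circ u)$ is a positive superharmonic function pinched between $\cos(\sqrt{\kappa}L)$ and $1$: the Hessian comparison $\Hess\phi\leq-\kappa\phi\,\mathfrak{g}$ on the regular ball and the composition formula $\Delta(\phi\circ u)=\sum_i\Hess\phi(du(e_i),du(e_i))$ for a harmonic map are both fine. The gap is the final step. The statement you invoke --- that on a complete manifold with $\ric\geq0$ every subharmonic function bounded from above (equivalently, every bounded positive superharmonic function) is constant --- is precisely the assertion that $M$ is parabolic, and this is \emph{false} under $\ric\geq0$: on $\mathbb{R}^m$ with $m\geq3$ the function $\min\bigl(1,\tfrac12+\tfrac12\vert x\vert^{2-m}\bigr)$ is a non-constant superharmonic function taking values in $(1/2,1]$, so it matches your pinching exactly. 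Yau's theorem concerns positive \emph{harmonic} functions, and the Cheng--Yau gradient estimate likewise requires an equation rather than a one-sided differential inequality; neither can be applied to the superharmonic $\phi\circ u$ nor to the bounded subharmonic $1/(\phi\circ u)$ as you suggest. So the soft Liouville argument cannot close, and the hedged ``slightly stronger form'' you hope for does not exist.

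To repair the argument one must retain the quantitative inequality $\Delta(\phi\circ u)\leq-\kappa(\phi\circ u)\Vert du\Vert^2$ rather than only its sign, and couple it with the Bochner inequality $\Delta\Vert du\Vert^2\geq2\Vert\nabla du\Vert^2-2\kappa\Vert du\Vert^4$. This is what Choi's original proof (following Cheng) and the present paper both do: apply the maximum principle on $B_R$ to a cutoff function times $\Vert du\Vert^2/(A-\varphi\circ u)^2$ with $\varphi=1-\cos\sqrt{\kappa}\rho$, use the Laplacian comparison for the distance from a fixed point --- this is where $\ric\geq0$ enters quantitatively --- to get $\Vert du\Vert\leq C/R$ on $B_{R/2}$, and let $R\to\infty$. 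In this paper that estimate is the static case of Theorem \ref{thm:gradient2} (via Proposition \ref{prop:difficult2}), and Theorem \ref{thm:choi} is then recovered as the special case of Corollary \ref{cor:follow} in which the growth condition \eqref{eq:follow} holds trivially because $1/\cos\sqrt{\kappa}\rho\circ u$ is bounded on a regular ball; the integral method of Section 6 gives an alternative quantitative route.
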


Theorem \ref{thm:main2} enables us to improve Theorem \ref{thm:choi} as follows:
\begin{cor}\label{cor:follow}
Let $(M,g)$ be a complete Riemannian manifold of non-negative Ricci curvature,
and let $(N,\mathfrak{g})$ be a complete Riemannian manifold with $\sec\leq \kappa$ for $\kappa>0$. 
Assume that $B_{\pi/2\sqrt{\kappa}}(y_0)$ does not meet $\cut (y_0)$.
Let $u:M \to N$ be a harmonic map.
If the image of $u$ is contained in $B_{\pi/2\sqrt{\kappa}}(y_0)$,
and if $u$ satisfies a growth condition 
\begin{equation}\label{eq:follow}
\frac{1}{\cos\sqrt{\kappa}\rho(u(x))} = o(d(x)^{1/2})
\end{equation}
near infinity, then $u$ is constant. 
\end{cor}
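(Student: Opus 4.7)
The plan is to derive Corollary \ref{cor:follow} from Theorem \ref{thm:main2} by viewing the static Riemannian manifold $(M,g)$ as a trivial backward super Ricci flow and the harmonic map $u$ as a time-independent solution to backward harmonic map heat flow. Set $g(\tau) := g$ for all $\tau \in [0,\infty)$, so that $h = \frac{1}{2}\partial_\tau g = 0$ and $H = \tr h = 0$. Then admissibility holds with $c_\tau = 0$. The curvature conditions \eqref{eq:main assumption2} reduce to checking that $\mathcal{D}(V) = 2\ric(V,V) \geq 0$ (which follows from $\ric \geq 0$), that $\mathcal{H}(V) = 0 = -H/\tau$, and that $H = 0 \geq 0$. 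Define $\tilde u : M \times [0,\infty) \to N$ by $\tilde u(x,\tau) := u(x)$; since $\partial_\tau \tilde u = 0$ and $\Delta \tilde u = \Delta u = 0$, $\tilde u$ is a solution to \eqref{eq:bHMHF}, with image in $B_{\pi/2\sqrt{\kappa}}(y_0)$.

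Next I would compute $\mathfrak{d}(x,\tau)$ in the static setting. With $h=0$ and vanishing scalar-type contribution, the $\mathcal{L}$-length of a curve $\gamma:[0,\tau]\to M$ reduces to $\int_0^\tau \sqrt{s}\,|\gamma'(s)|^2\,ds$; the standard reparametrization $\sigma = 2\sqrt{s}$ turns this into a usual energy integral, and the minimum over curves joining $x_0$ to $x$ is attained by a Riemannian geodesic, giving
\begin{equation*}
\ell(x,\tau) = \frac{d(x)^2}{4\tau}, \qquad \mathfrak{d}(x,\tau) = \sqrt{4\tau\,\ell(x,\tau)} = d(x).
\end{equation*}
Hence the growth condition of Theorem \ref{thm:main2} for $\tilde u$ becomes
\begin{equation*}
\frac{1}{\cos\sqrt{\kappa}\rho(u(x))} = o\bigl(d(x)^{1/2} + \tau^{1/4}\bigr)
\end{equation*}
as $d(x) + \sqrt{\tau} \to \infty$.

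The final step is to verify that the hypothesis \eqref{eq:follow} implies this parabolic growth bound. If $d(x) \to \infty$ (independent of $\tau$), assumption \eqref{eq:follow} directly gives $1/\cos(\sqrt{\kappa}\rho(u(x))) = o(d(x)^{1/2}) = o(d(x)^{1/2} + \tau^{1/4})$. If $d(x)$ stays bounded while $\tau \to \infty$, the point $x$ lies in a fixed compact set, so $u(x)$ lies in a compact subset of the open ball $B_{\pi/2\sqrt{\kappa}}(y_0)$, which makes $1/\cos(\sqrt{\kappa}\rho(u(x)))$ uniformly bounded there, hence $o(\tau^{1/4})$. Combining these two regimes, the parabolic growth condition holds near infinity, so Theorem \ref{thm:main2} applies and $\tilde u$, hence $u$, is constant.

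The main technical point to get right is the reduction to the static $\mathfrak{d}(x,\tau) = d(x)$, which depends on how the $\mathcal{L}$-length is defined in this paper (especially the presence or absence of an $H$- or scalar-curvature term); the rest is a verification that the hypotheses of Theorem \ref{thm:main2} are automatically satisfied. A minor but genuine subtlety is the boundedness step for $x$ in a compact set: since the image of $u$ is in the open ball, one must use continuity of $u$ together with compactness of the $d$-ball to guarantee that $\rho(u(x))$ stays strictly below $\pi/(2\sqrt{\kappa})$ on that set.
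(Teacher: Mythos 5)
Your proposal is correct and is exactly the reduction the paper intends: the corollary is stated as an immediate consequence of Theorem \ref{thm:main2} applied to the static flow $g(\tau)\equiv g$, for which $h=0$, $H=0$, $\mathcal{D}(V)=2\ric(V,V)\geq 0$, $\mathcal{H}(V)=0$, and $\mathfrak{d}(x,\tau)=d(x)$. Your extra care in checking the two regimes of the parabolic "near infinity" condition (including boundedness of $1/\cos\sqrt{\kappa}\rho\circ u$ on compact sets) is a correct filling-in of details the paper leaves implicit.
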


The growth condition \eqref{eq:follow} controls the approach speed of $u$ to the boundary of $B_{\pi/2\sqrt{\kappa}}(y_0)$.
Note that if the image of $u$ is contained in a regular ball $B_{L}(y_0)$,
then the left hand side of \eqref{eq:follow} is bounded; in particular, \eqref{eq:follow} is trivially satisfied.

\begin{rem}\label{rem:new point}
In the literature of Liouville theorems for harmonic maps with positively curved targets,
the results in the form of Theorem \ref{thm:choi} have been examined (see e.g., \cite[Theorem 1]{HJW}, \cite[Theorem]{Cho}, \cite[Theorem 6.1]{Ke}, \cite[Theorem 1.4]{STW}, \cite[Example 3]{KuSt}, \cite[Theorem 3.2]{LW}, \cite[Theorem 2]{CJQ}, \cite[Theorem 2]{Q}, \cite[Corollary 1.8]{ZZZ}).
We emphasize that
in Corollary \ref{cor:follow}, such a condition is relaxed to 
a growth condition \eqref{eq:follow} beyond the traditional form.
\end{rem}

Although our formulation of Theorem \ref{thm:main2} and Corollary \ref{cor:follow} is new, the growth condition \eqref{eq:follow} is not sharp.
Actually, we can further improve it as follows:
\begin{thm}\label{thm:improve}
Let $(M,g)$ be a complete Riemannian manifold of non-negative Ricci curvature,
and let $(N, \mathfrak{g})$ be a complete Riemannian manifold with $\sec\leq \kappa$ for $\kappa>0$. 
Assume that $B_{\pi/2\sqrt{\kappa}}(y_0)$ does not meet $\cut (y_0)$.
Let $u:M \to N$ be a harmonic map.
If the image of $u$ is contained in $B_{\pi/2\sqrt{\kappa}}(y_0)$,
and if $u$ satisfies a growth condition 
\begin{equation}\label{eq:improve}
\frac{1}{\cos\sqrt{\kappa}\rho(u(x))} = o(d(x))
\end{equation}
near infinity, then $u$ is constant. 
\end{thm}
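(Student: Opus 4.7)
The plan is to establish a Cheng--Yau type local gradient estimate for $|du|$ in terms of the auxiliary scalar $w := \sec(\sqrt{\kappa}\rho\circ u) \geq 1$, sharp enough to convert the hypothesis $w = o(d)$ into $du\equiv 0$.

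First, I would derive a differential inequality for $w$. By Hessian comparison on $N$ under $\sec \leq \kappa$ (valid on $B_{\pi/2\sqrt{\kappa}}(y_0)$ since this ball avoids $\cut(y_0)$), one has $\Hess\cos(\sqrt{\kappa}\rho) \leq -\kappa\cos(\sqrt{\kappa}\rho)\,\mathfrak{g}$. Composing with the harmonic map $u$ (using $\Delta u = 0$) and rearranging yields
\begin{equation*}
\Delta w \;\geq\; \kappa\, w\, |du|^{2} \;+\; 2\,\frac{|\nabla w|^{2}}{w},
\end{equation*}
so in particular $w$ is subharmonic on $M$.

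Second, combining this with the Bochner--Eells--Sampson inequality
\begin{equation*}
\tfrac{1}{2}\Delta|du|^{2} \;\geq\; |\nabla du|^{2} - \kappa\, |du|^{4}
\end{equation*}
(using $\ric^{M} \geq 0$ and $\sec^{N} \leq \kappa$), I would run a maximum principle argument on the cut-off quantity
\begin{equation*}
F \;=\; \eta^{2}\,\frac{|du|^{2}}{w^{2}}
\end{equation*}
on $B_{2R}(x_{0})$, where $\eta$ is the standard Laplacian-comparison cutoff equal to $1$ on $B_{R}(x_{0})$, supported in $B_{2R}(x_{0})$, with $|\nabla\eta|\le C/R$ and $-\Delta\eta\le C/R^{2}$ in the barrier sense. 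The differentiation of the factor $w^{-2}$ at the interior maximum of $F$ produces cross-terms that are exactly absorbed by the extra $2|\nabla w|^{2}/w$ term in the inequality for $\Delta w$. The outcome will be a local gradient estimate of the form
\begin{equation*}
|du|^{2}(x_{0}) \;\leq\; \frac{C(n)}{R^{2}}\,\sup_{B_{2R}(x_{0})} w^{2}.
\end{equation*}

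Finally, under the growth hypothesis $w = o(d)$, $\sup_{B_{2R}(x_{0})}w = o(R+d(x_0)) = o(R)$ as $R\to\infty$, and the above estimate gives $|du|^{2}(x_{0}) \leq C\cdot o(R)^{2}/R^{2} \to 0$. Hence $du \equiv 0$ and $u$ is constant.

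The main obstacle is the local gradient estimate above: the delicate issue is to obtain a right-hand side linear in $\sup w^{2}$ rather than $\sup w^{4}$, since the latter would only accommodate the hypothesis $w = o(d^{1/2})$, which is precisely what Theorem \ref{thm:main2} yields when specialised to the static case. The improvement rests on exploiting the extra gradient term $2|\nabla w|^{2}/w$ in the inequality for $\Delta w$ (a purely elliptic phenomenon, with no parabolic analogue via Souplet--Zhang) and on calibrating the exponent of $w$ in $F$ exactly at $2$ so that the bad cross-terms cancel at the maximum; the remaining passage to infinity is then routine.
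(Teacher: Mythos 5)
There is a genuine gap at precisely the step you flag as the main obstacle: the local gradient estimate obtained by applying the maximum principle to $F=\eta^{2}\Vert du\Vert^{2}/w^{2}$. Write $f=\Vert du\Vert^{2}$ and note $w^{-2}=\cos^{2}(\sqrt{\kappa}\rho\circ u)$. In
$\Delta\bigl(fw^{-2}\bigr)=w^{-2}\Delta f+f\,\Delta(w^{-2})+2g\bigl(\nabla f,\nabla (w^{-2})\bigr)$
you need a \emph{lower} bound on $\Delta(w^{-2})=-2w^{-3}\Delta w+6w^{-4}\Vert\nabla w\Vert^{2}$, hence an \emph{upper} bound on $\Delta w$. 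The Hessian comparison under $\sec\leq\kappa$ gives only the lower bound $\Delta w\geq\kappa w f+2\Vert\nabla w\Vert^{2}/w$ (an upper bound would require a lower sectional curvature bound on $N$, which is not assumed). So the Hessian-comparison term enters your computation with the wrong sign: it cannot be used to bound $\Delta F$ from below, and the cross terms are not the issue. Substituting the two inequalities you do have, the only quadratic term surviving on the right-hand side is $-2\kappa w^{-2}f^{2}$ from Bochner, with nothing positive to compete against it; the inequality $\Delta F\leq 0$ at an interior maximum then yields no control on $F$ whatsoever. This is why every working version of this argument (Choi's, and Theorem \ref{thm:gradient2} here) places $\cos\sqrt{\kappa}\rho\circ u$ so that its convexity enters with the favorable sign, e.g.\ via a denominator $A-\varphi\circ u$ with $\varphi=1-\cos\sqrt{\kappa}\rho$ --- and that route only delivers $o(d^{1/2})$.

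The paper's actual proof is structurally different. It puts \emph{positive} powers of $v=1/\cos\sqrt{\kappa}\rho\circ u$ into the test function, considering $\Vert du\Vert^{p}v^{q}$ with $q\geq p$, so that $\Delta v\geq\kappa vf+2\Vert\nabla v\Vert^{2}/v$ enters with the correct sign and the curvature terms combine into the nonnegative term $\kappa(q-p)\Vert du\Vert^{p+2}v^{q}$. The gradient cross terms are not cancelled at a maximum point but absorbed using Young's inequality together with the refined Kato inequality $\Vert\nabla\Vert du\Vert\Vert^{2}\leq\frac{m-1}{m}\Vert\nabla du\Vert^{2}$ (Lemma \ref{lem:ref-kato}), which is what forces the large exponents $q>2m-3$; without the extra $\frac{1}{m-1}$ the relevant coefficient is negative for every choice of $p=q$. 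The conclusion is then reached by an integral, Ecker--Huisken-type iteration: the Li--Schoen mean value inequality for the subharmonic function $\Vert du\Vert^{q}v^{q}$, a Caccioppoli estimate coming from $\Delta(\Vert du\Vert^{q-1}v^{q})\geq\kappa\Vert du\Vert^{q+1}v^{q}$, and Bishop--Gromov, producing $\sup_{B_{R/4}}\Vert du\Vert v\leq C\,\sup_{B_{2R}}v/R$ --- linear in $\sup v$ as required. Your final limiting step is fine, but the gradient estimate it relies on does not follow from the argument you outline.
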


We can obtain Theorem \ref{thm:improve} by adopting the technique for minimal hypersurfaces developed by Ecker-Huisken \cite{EH}.

\begin{rem}
According to Schoen-Uhlenbeck \cite{SU} (see also \cite{GS}),
a harmonic map $u:\mathbb{R}^m \to \mathbb{S}^n_+$ is necessarily constant for $m\leq 6$,
and for $m\geq 7$ such a map exists as a radial solution,
where $\mathbb{S}^n_+$ is the open hemisphere.
In Subsection \ref{subsec:SU-example}, we observe that
the growth of the radial solution is greater than the linear order.
Moreover,
it approaches the linear order as $m\to \infty$.
In this sense,
our growth condition \eqref{eq:improve} is almost sharp. 
\end{rem}

\section{Preliminaries}\label{sec:Preliminaries}
We review some facts on Perelman's reduced geometry.
The references are \cite{CCGG}, \cite{M}, \cite{P}, \cite{Ye}, \cite{Y1}, \cite{Y2}, \cite{KuS}.
We mainly refer to \cite[Section 3]{KuS}.
Throughout this subsection,
let $(M,g(\tau))_{\tau \in [0,\infty)}$ be an $m$-dimensional, complete time-dependent Riemannian manifold.

For a curve $\gamma:[\tau_1,\tau_2]\to M$,
its \textit{$\mathcal{L}$-length} is defined as
\begin{equation*}
\mathcal{L}(\gamma):=\int^{\tau_2}_{\tau_1}\sqrt{\tau}\left( H +\left\Vert \frac{d\gamma}{d\tau} \right\Vert^{2} \right)\,d\tau.
\end{equation*}
It is well-known that
its critical point over all curves with fixed endpoints is characterized by the following \textit{$\mathcal{L}$-geodesic equation}:
\begin{equation*}\label{geod}
X:=\frac{d\gamma}{d\tau},\quad \nabla_X X - \frac{1}{2}\nabla H+ \frac{1}{2\tau}X + 2h(X) =0.
\end{equation*}
For $(x,\tau)\in M\times (0,\infty)$,
the \textit{$L$-distance} $L(x,\tau)$ and \textit{reduced distance} $\ell(x,\tau)$ from a space-time base point $(x_0,0)$ are defined by
\begin{equation}\label{eq:L and l}
L(x,\tau):=\inf_{\gamma}\mathcal{L}(\gamma),\quad \ell(x,\tau):=\frac{1}{2\sqrt{\tau}}L(x,\tau),
\end{equation}
where the infimum is taken over all curves $\gamma:[0,\tau]\to M$ with $\gamma(0)=x_0$ and $\gamma(\tau)=x$.
A curve is called \textit{minimal $\mathcal{L}$-geodesic} from $(x_0,0)$ to $(x,\tau)$ if it attains the infimum of (\ref{eq:L and l}).
We also set
\begin{equation*}
\overline{L}(x,\tau):=4\tau \,\ell(x,\tau).
\end{equation*}

We now assume that
$(M,g(\tau))_{\tau \in [0,\infty)}$ is admissible (see Subsection \ref{sec:heat eq}).
In this case,
for every $(x,\tau)\in M\times (0,\infty)$,
there exists at least one minimal $\mathcal{L}$-geodesic.
Also,
the functions $L(\cdot,\tau)$ and $L(x,\cdot)$ are locally Lipschitz in $(M,g(\tau))$ and $(0,\infty)$,
respectively;
in particular,
they are differentiable almost everywhere.

Assume that
$\ell$ is smooth at $(\overline{x},\overline{\tau})\in M\times (0,\infty)$.
We have (see \cite[Lemmas 3.5 and 3.6]{KuS}):
\begin{lem}[\cite{KuS}]\label{lem:reduced heat estimate2}
Let $K\geq 0$.
We assume
\begin{equation*}\label{eq:assume reduced heat estimate2}
\mathcal{D}(V) \geq -2K\left(H + \Vert V \Vert^2\right),\quad H\geq 0
\end{equation*}
for all vector fields $V$.
Then at $(\overline{x},\overline{\tau})$ we have
\begin{equation*}
(\Delta+\partial_\tau)\overline{L} \leq 2m+2K \overline{L}.
\end{equation*}
\end{lem}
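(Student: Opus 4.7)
The plan is to follow the Perelman--M\"uller blueprint: combine the first and second variation of $\mathcal{L}$-length at $(\overline{x},\overline{\tau})$, and then trace the resulting identity. Since $\ell$ is smooth at $(\overline{x},\overline{\tau})$, there is a unique minimal $\mathcal{L}$-geodesic $\gamma:[0,\overline{\tau}]\to M$ joining $(x_0,0)$ to $(\overline{x},\overline{\tau})$; write $X:=d\gamma/d\tau$. First variation at the endpoint yields $\nabla L(\overline{x},\overline{\tau})=2\sqrt{\overline{\tau}}\,X(\overline{\tau})$, and combining the total derivative $dL/d\tau=\sqrt{\tau}(H+\Vert X\Vert^2)$ along $\gamma$ with the chain-rule identity $dL/d\tau=\partial_{\tau}L+g(\nabla L,X)$ gives $\partial_{\tau}L(\overline{x},\overline{\tau})=\sqrt{\overline{\tau}}\,H-\sqrt{\overline{\tau}}\,\Vert X(\overline{\tau})\Vert^2$. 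Converting from $L$ to $\overline{L}=2\sqrt{\tau}\,L$ then produces the desired expression for $\partial_{\tau}\overline{L}$.

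For the Laplacian bound I would plug the variation fields $Y_i(\tau):=\sqrt{\tau/\overline{\tau}}\,\widetilde e_i(\tau)$ into the second variation formula for $\mathcal{L}$-length, where $\{e_i\}_{i=1}^m$ is a $g(\overline{\tau})$-orthonormal basis of $T_{\overline{x}}M$ and $\widetilde e_i$ is its parallel transport along $\gamma$ with respect to the $\tau$-frozen metric $g(\overline{\tau})$. Each $Y_i$ vanishes at $\tau=0$ and equals $e_i$ at $\overline{\tau}$, so the $\mathcal{L}$-index form $I(Y_i,Y_i)$ dominates $\Hess L(e_i,e_i)$. Summing over $i$, differentiating $\Vert\widetilde e_i\Vert_{g(\tau)}^2$ along $\gamma$ to pick up the $h$-contributions, and integrating by parts using the $\mathcal{L}$-geodesic equation, the trace of the curvature terms condenses exactly into the M\"uller quantity. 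After rescaling to $\overline{L}$ and combining with first variation, one arrives at the pointwise bound
\[
(\Delta+\partial_{\tau})\overline{L}(\overline{x},\overline{\tau})\;\leq\;2m-\frac{2}{\sqrt{\overline{\tau}}}\int_{0}^{\overline{\tau}}\tau^{3/2}\,\mathcal{D}\bigl(X(\tau)\bigr)\,d\tau.
\]

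To close the estimate, invoke the hypothesis $\mathcal{D}(V)\geq -2K(H+\Vert V\Vert^2)$ together with $H\geq 0$:
\[
-\frac{2}{\sqrt{\overline{\tau}}}\int_{0}^{\overline{\tau}}\tau^{3/2}\,\mathcal{D}(X)\,d\tau\;\leq\;\frac{4K}{\sqrt{\overline{\tau}}}\int_{0}^{\overline{\tau}}\tau^{3/2}\bigl(H+\Vert X\Vert^2\bigr)\,d\tau\;\leq\;4K\sqrt{\overline{\tau}}\,L\;=\;2K\overline{L},
\]
where the middle step uses $\tau^{3/2}\leq\overline{\tau}\sqrt{\tau}$ and the defining identity $\int_{0}^{\overline{\tau}}\sqrt{\tau}(H+\Vert X\Vert^2)\,d\tau=L(\overline{x},\overline{\tau})$ for the $\mathcal{L}$-length of the minimizer. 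Plugging this into the previous display yields $(\Delta+\partial_{\tau})\overline{L}\leq 2m+2K\overline{L}$, as required. The main obstacle is the second variation bookkeeping: parallel transport with respect to a \emph{time-dependent} metric injects extra $h$-terms into every commutator and covariant derivative, and the trace of the curvature contributions must be orchestrated through several integrations by parts so that it collapses cleanly onto $\mathcal{D}(X)$. This step, standard after Perelman and M\"uller and carried out in detail in \cite{KuS}, is the only technically delicate piece; the first variation, the role of $H\geq 0$, the $\overline{L}=2\sqrt{\tau}\,L$ rescaling, and the final bounding step are all routine.
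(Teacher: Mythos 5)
Your proposal is correct and follows essentially the same route as the paper's source [KuS] (the paper itself only cites [KuS, Lemma 3.5] without proof): the Perelman--M\"uller first and second variation of $\mathcal{L}$-length, traced over the rescaled parallel frame, yielding $(\Delta+\partial_{\tau})\overline{L}\leq 2m-\tfrac{2}{\sqrt{\overline{\tau}}}\int_0^{\overline{\tau}}\tau^{3/2}\mathcal{D}(X)\,d\tau$, followed by the bound on the $\mathcal{D}$-integral. Your final step is exactly where the hypotheses enter, and it is carried out correctly: $H\geq 0$ guarantees $H+\Vert X\Vert^2\geq 0$ so that $\tau^{3/2}\leq\overline{\tau}\sqrt{\tau}$ can be applied under the integral, and the identity $\int_0^{\overline{\tau}}\sqrt{\tau}(H+\Vert X\Vert^2)\,d\tau=L$ together with $\overline{L}=2\sqrt{\overline{\tau}}L$ gives the term $2K\overline{L}$.
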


\begin{lem}[\cite{KuS}]\label{lem:reduced gradient estimate}
We assume
\begin{equation*}\label{eq:assumption rge}
\mathcal{H}(V) \geq -\frac{H}{\tau},\quad H\geq 0
\end{equation*}
for all vector fields $V$.
Then at $(\overline{x},\overline{\tau})$ we have
\begin{equation*}
\Vert \nabla \mathfrak{d} \Vert^2\leq 3.
\end{equation*}
\end{lem}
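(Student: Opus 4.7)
The plan is to reduce the gradient estimate to a standard identity in Perelman's reduced geometry. Noting that $\mathfrak{d}^{2}=\overline{L}=4\tau\ell$, the chain rule at the smooth point $(\overline{x},\overline{\tau})$ gives
\begin{equation*}
\Vert\nabla\mathfrak{d}\Vert^{2}=\frac{\Vert\nabla\overline{L}\Vert^{2}}{4\overline{L}}=\frac{\tau}{\ell}\,\Vert\nabla\ell\Vert^{2},
\end{equation*}
so the desired bound $\Vert\nabla\mathfrak{d}\Vert^{2}\le 3$ is equivalent to $\Vert\nabla\ell\Vert^{2}\le 3\ell/\tau$. This reformulation makes it natural to search for a Perelman-type identity for $\Vert\nabla\ell\Vert^{2}$.

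Next, I would establish a Perelman--M\"uller type identity. Let $\gamma:[0,\overline{\tau}]\to M$ be a minimal $\mathcal{L}$-geodesic from $(x_{0},0)$ to $(\overline{x},\overline{\tau})$ with velocity $X$; admissibility guarantees its existence. The first variation formula gives $\nabla L(\overline{x},\overline{\tau})=2\sqrt{\overline{\tau}}\,X(\overline{\tau})$, and hence $\Vert\nabla\ell\Vert^{2}=\Vert X(\overline{\tau})\Vert^{2}$. The core computation is to prove the identity
\begin{equation*}
\Vert\nabla\ell\Vert^{2}=-H+\frac{\ell}{\tau}-\frac{K}{\tau^{3/2}},\qquad K:=\int_{0}^{\overline{\tau}}s^{3/2}\,\mathcal{H}(X(s))\,ds,
\end{equation*}
which I would derive by differentiating $\sqrt{s}\,(H+\Vert X\Vert^{2})$ along $\gamma$, using the $\mathcal{L}$-geodesic equation to eliminate $\nabla_{X}X$, and re-expressing the combination $\partial_{\tau}H+2g(\nabla H,X)-2h(X,X)$ in terms of $\mathcal{H}(X)$ and $H/\tau$; integrating on $[0,\overline{\tau}]$ then produces the $K$-term.

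Once this identity is in hand, the hypotheses feed in directly. The Harnack assumption $\mathcal{H}(V)\ge -H/\tau$ applied to $V=X(s)$ yields
\begin{equation*}
K\ge -\int_{0}^{\overline{\tau}}s^{3/2}\,\frac{H(\gamma(s),s)}{s}\,ds=-\int_{0}^{\overline{\tau}}\sqrt{s}\,H(\gamma(s),s)\,ds\ge -L(\overline{x},\overline{\tau})=-2\sqrt{\overline{\tau}}\,\ell,
\end{equation*}
where the last inequality uses $\Vert X\Vert^{2}\ge 0$ inside the defining integral of $L$. Hence $-K/\tau^{3/2}\le 2\ell/\tau$, and combining with $H\ge 0$ we conclude
\begin{equation*}
\Vert\nabla\ell\Vert^{2}\le -H+\frac{\ell}{\tau}+\frac{2\ell}{\tau}\le \frac{3\ell}{\tau},
\end{equation*}
as required. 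The main obstacle is the Perelman--M\"uller identity itself: in the super Ricci flow setting one must carefully track the contributions from the time-dependence of $g$ and from the general $h$ (in place of $\ric$), and package them into $\mathcal{H}(V)$ so that the Harnack hypothesis is applied without slack. Once that bookkeeping is correct, the remainder is a short inequality and the bounds $\mathcal{H}(V)\ge -H/\tau$ and $H\ge 0$ are both used, which explains the exact constant $3$ rather than something smaller.
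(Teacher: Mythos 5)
Your proposal is correct and follows essentially the same route as the proof in the cited reference \cite{KuS}: one invokes the Perelman--M\"uller identity $\Vert\nabla\ell\Vert^{2}=-H+\ell/\tau-K/\tau^{3/2}$ with $K=\int_{0}^{\overline{\tau}}s^{3/2}\mathcal{H}(X)\,ds$ along a minimal $\mathcal{L}$-geodesic, bounds $K\ge-\int\sqrt{s}\,H\,ds\ge-L=-2\sqrt{\overline{\tau}}\,\ell$ via the two hypotheses, and converts $\Vert\nabla\ell\Vert^{2}\le 3\ell/\tau$ into $\Vert\nabla\mathfrak{d}\Vert^{2}\le 3$. The bookkeeping you flag as the main obstacle is exactly M\"uller's computation, so nothing further is needed.
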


\section{Proof of Theorem \ref{thm:main1}}\label{sec:Proof of main results}
In this section,
we prove Theorem \ref{thm:main1}.
For $K\in \mathbb{R}$,
a time-dependent Riemannian manifold $(M,g(t))_{t\in I}$ is called \textit{$K$-super Ricci flow} if
\begin{equation*}
\frac{1}{2}\partial_{t}g+\ric \geq K g.
\end{equation*}
The key ingredient is the following local gradient estimate (cf. \cite[Theorem 2.8]{KuS}):
\begin{thm}\label{thm:gradient1}
For $K\geq 0$,
let $(M,g(\tau))_{\tau \in [0,\infty)}$ be an $m$-dimensional, admissible, complete backward $(-K)$-super Ricci flow,
namely,
\begin{equation*}\label{eq:KsRF}
\ric \geq h-K g.
\end{equation*}
We assume 
\begin{equation}\label{eq:Kmain assumption}
\mathcal{D}(V)\geq -2K\left(H+\Vert V\Vert^2   \right),\quad \mathcal{H}(V) \geq -\frac{H}{\tau},\quad H\geq 0
\end{equation}
for all vector fields $V$.
Let $(N,\mathfrak{g})$ stand for a complete, simply connected Riemannian manifold with $\sec \leq 0$.
For a fixed $y_0\in N$,
let $\rho:N\to \mathbb{R}$ be the Riemannian distance function from $y_0$.
Let $u:M\times [0,\infty)\to N$ be a solution to backward harmonic map heat flow.
For $R,T>0$ and $A>0$,
we suppose $2\rho \circ u\leq A$ on
\begin{equation*}
Q_{R,T}:=\left\{\,(x,\tau)\in M\times \left(0,T\right] \,\,\, \middle|\,\,\, \mathfrak{d}(x,\tau)\leq R  \,\right\}.
\end{equation*}
Then there exists a positive constant $C_{m}>0$ depending only on $m$ such that on $Q_{R/2,T/4}$,
\begin{equation*}
\frac{\Vert du \Vert}{A^2-\rho^2 \circ u}\leq \frac{C_m}{A} \left( \frac{1}{R}+\frac{1}{\sqrt{T}}+\sqrt{K}  \right).
\end{equation*}
\end{thm}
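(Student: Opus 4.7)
The plan is to adapt the reduced-distance cutoff argument used for the scalar Liouville theorem in \cite{KuS}, replacing the scalar $u^2$ with $\rho^2\circ u$ and controlling the energy density via a Bochner identity. Set $v:=\rho\circ u$, $f:=A^2-v^2$, and $\Phi:=\|du\|^2/f^2$; the hypothesis $2v\leq A$ on $Q_{R,T}$ ensures $f\geq \tfrac{3}{4}A^2>0$ there. Two pointwise differential inequalities will drive the argument. First, since $N$ is simply connected with $\sec\leq 0$, the Hessian comparison theorem gives $\Hess(\rho^2)\geq 2\mathfrak{g}$; combined with the composition formula for the time-dependent tension field and $(\Delta+\partial_\tau)u=0$, this yields
\begin{equation*}
(\Delta+\partial_\tau)v^2=\Hess(\rho^2)(du,du)\geq 2\|du\|^2,\qquad\text{hence}\qquad (\Delta+\partial_\tau)f\leq -2\|du\|^2.
\end{equation*}
Second, the Bochner--Weitzenb\"ock identity for the energy density along a harmonic map heat flow on a time-dependent background takes the schematic form
\begin{equation*}
(\Delta+\partial_\tau)\|du\|^2=2\|\nabla du\|^2-2\langle R^N(du,du)du,du\rangle+2(\ric-h)(du,du),
\end{equation*}
where the $h$-corrections arise from the interaction of $\partial_\tau g$ with tracing and with $\Delta$ acting on $du$. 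Non-positivity of the sectional curvature of $N$ kills the curvature term (it enters with a favorable sign), and the backward $(-K)$-super Ricci flow hypothesis $\ric\geq h-Kg$ reduces the last term to $\geq -K\|du\|^2$, giving $(\Delta+\partial_\tau)\|du\|^2\geq 2\|\nabla du\|^2-2K\|du\|^2$.

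Next I would combine these two inequalities through the Leibniz rule for $\Phi=\|du\|^2/f^2$ together with Kato's inequality $\|\nabla du\|^2\geq \|\nabla\|du\|\|^2$. A routine calculation produces an inequality of Souplet--Zhang type,
\begin{equation*}
(\Delta+\partial_\tau)\Phi\geq -\frac{c_1}{f}g(\nabla f,\nabla\Phi)+c_2\,f\,\Phi^2-2K\Phi,
\end{equation*}
with dimensional constants $c_1,c_2>0$. Localization is then effected by a cutoff of the form $\eta(x,\tau):=\psi(\mathfrak{d}(x,\tau)^2/R^2)\,\psi(\tau/T)$ with $\psi\in C^\infty(\mathbb{R})$ a standard non-increasing plateau function satisfying $(\psi')^2\leq C\psi$ and $\psi''\geq -C$. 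Applying Lemmas \ref{lem:reduced heat estimate2} and \ref{lem:reduced gradient estimate} exactly as in \cite{KuS} gives the a.e.\ bounds $\|\nabla\eta\|^2/\eta\leq C_m/R^2$ and $(\Delta+\partial_\tau)\eta\geq -C_m(1/R^2+1/T+K)$ on $\supp\eta$.

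The conclusion follows by a parabolic maximum principle argument applied to $\eta\Phi$ on $Q_{R,T}$ (Calabi's trick will handle the merely a.e.\ regularity of $\mathfrak{d}$). At an interior space-time maximum, $\nabla(\eta\Phi)=0$ lets one replace $\nabla\Phi$ by $-\Phi\nabla\eta/\eta$, while $(\Delta+\partial_\tau)(\eta\Phi)\leq 0$ combined with the preceding inequality and Young's inequality absorbs the cross-term $g(\nabla f,\nabla\Phi)$ and the $\|\nabla du\|^2$ terms. What remains is an algebraic bound $c_2 f\Phi\leq C_m(1/R^2+1/T+K)$; since $f\geq \tfrac{3}{4}A^2$ on $\supp\eta$, this gives $\Phi\leq C_m' A^{-2}(1/R^2+1/T+K)$ on $Q_{R/2,T/4}$, and taking square roots yields the stated estimate. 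The principal obstacle is making the Bochner step rigorous on a time-dependent background: one must carefully commute $\Delta$ with $\partial_\tau$ on the $\mathfrak{g}$-valued one-form $du$ and track every $h$-correction that appears when traces interact with $\partial_\tau g$. The point is that the super Ricci flow hypothesis $\ric\geq h-Kg$ is precisely what collapses these corrections into a single $-K\|du\|^2$ defect, after which the remainder of the argument runs in close parallel with the scalar case treated in \cite{KuS}.
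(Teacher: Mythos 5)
Your proposal follows essentially the same route as the paper's proof: the same auxiliary quantity $w=\Vert du\Vert^2/(A^2-\rho^2\circ u)^2$, the Greene--Wu Hessian comparison for $(\Delta+\partial_\tau)(\rho^2\circ u)$, the Eells--Sampson-type Bochner formula with the $\mathcal{R}=\ric-h$ correction controlled by the $(-K)$-super Ricci flow condition, a cutoff in the reduced distance $\mathfrak{d}$ estimated via Lemmas \ref{lem:reduced heat estimate2} and \ref{lem:reduced gradient estimate}, and a maximum-principle argument with Calabi's trick. The only differences are cosmetic (the exact form of the cutoff and the bookkeeping of the cross terms), so the argument is correct as outlined.
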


In the static case of $h=0$,
Wang \cite{W} has obtained Theorem \ref{thm:gradient1} (see \cite[Theorem 1.2]{W}).
We will prove Theorem \ref{thm:gradient1} along the line of the proof of \cite[Theorem 1.2]{W}.

\subsection{Backward harmonic map heat flows}\label{sec:Formulas for backward heat equations}
In this and next section,
let $(M,g(\tau))_{\tau \in [0,\infty)}$ denote an $m$-dimensional, admissible, complete time-dependent Riemannian manifold,
and let $(N,\mathfrak{g})$ be a complete Riemannian manifold.
Moreover,
for a fixed $y_0\in N$,
let $\rho:N\to \mathbb{R}$ stand for the Riemannian distance function from $y_0$.
We study properties of a solution $u:M\times [0,\infty)\to N$ to backward harmonic map heat flow.
We start with the following:
\begin{lem}\label{lem:easy}
\begin{align*}
(\Delta+\partial_{\tau}) \Vert du\Vert^2 =2\Vert \nabla du\Vert^2&+2\sum^{m}_{i=1}\mathfrak{g}(du(\mathcal{R}(e_i)),du(e_i))\\
                                                 &-2\sum^{m}_{i,j=1}\mathfrak{g}(R(du(e_i),du(e_j))du(e_j),du(e_i)),
\end{align*}
where $\mathcal{R}:=\ric-h$,
and $\{e_i\}^{m}_{i=1}$ is an orthonormal frame on $M$ at some fixed time.
\end{lem}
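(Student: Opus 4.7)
The plan is to split $(\Delta+\partial_{\tau})\Vert du\Vert^2$ into its spatial and temporal pieces, compute each independently, and then cancel the cross-term using the backward harmonic map heat flow equation $\partial_\tau u = -\Delta u$.

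First I would treat the spatial part by freezing the metric $g(\tau)$ at a fixed $\tau$ and applying the classical Bochner formula for smooth maps between Riemannian manifolds. Choosing a $g(\tau)$-orthonormal frame $\{e_i\}$ with $\nabla e_i=0$ at the point in question, differentiating $\Vert du\Vert^2=\sum_i \mathfrak{g}(du(e_i),du(e_i))$ twice, and commuting covariant derivatives on $M$ (which produces a Ricci term) as well as in $u^{*}TN$ (which produces the curvature tensor of the target), one obtains
\begin{equation*}
\Delta\Vert du\Vert^2 = 2\Vert \nabla du\Vert^2 + 2\mathfrak{g}(\nabla\Delta u,du) + 2\sum_{i}\mathfrak{g}(du(\ric(e_i)),du(e_i)) - 2\sum_{i,j}\mathfrak{g}(R(du(e_i),du(e_j))du(e_j),du(e_i)).
\end{equation*}

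Next I would compute $\partial_\tau\Vert du\Vert^2$ while keeping track of the time-dependence of $g$. Writing $\Vert du\Vert^2=g^{ij}\mathfrak{g}(\partial_i u,\partial_j u)$ in local coordinates on $M$, using $\partial_\tau g^{ij}=-2h^{ij}$, and invoking the identity $\nabla^{u^{*}TN}_{\tau}(\partial_i u)=\nabla^{u^{*}TN}_{i}(\partial_\tau u)$ (which follows from $[\partial_\tau,\partial_i]=0$ together with torsion-freeness of $\nabla^{N}$), one finds
\begin{equation*}
\partial_\tau\Vert du\Vert^2 = -2\sum_{i}\mathfrak{g}(du(h(e_i)),du(e_i)) + 2\mathfrak{g}(\nabla(\partial_\tau u),du).
\end{equation*}
Adding the two identities and substituting $\partial_\tau u=-\Delta u$, the terms $2\mathfrak{g}(\nabla\Delta u,du)$ and $2\mathfrak{g}(\nabla\partial_\tau u,du)$ cancel, while the $\ric$-contribution from the spatial part and the $-h$-contribution from the temporal part combine into the single term involving $\mathcal{R}=\ric-h$. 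This is precisely the stated identity.

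The main obstacle is really just bookkeeping in the second step: one must carefully distinguish the Levi-Civita connection of the evolving metric $g(\tau)$ on $M$ from the pull-back connection on $u^{*}TN$, verify the sign from $\partial_\tau g^{ij}=-2h^{ij}$, and confirm the commutation of $\partial_\tau$ with the $M$-covariant derivative acting on sections of $u^{*}TN$. A clean way to organize this is to fix $(x_0,\tau_0)$, choose coordinates on $M$ that are $g(\tau_0)$-normal at $x_0$, and use the corresponding orthonormal frame only at that single point; since both sides of the claimed identity are tensorial, the pointwise computation is enough.
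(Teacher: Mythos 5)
Your proposal is correct and follows essentially the same route as the paper: the authors likewise compute $\partial_{\tau}\Vert du\Vert^2$ via $\partial_{\tau}g^{ij}=-2h^{ij}$ and the flow equation $\partial_{\tau}u=-\Delta u$, then combine with the Eells--Sampson Bochner formula so that the $\mathfrak{g}(\nabla^{u^{-1}TN}\Delta u,du)$ terms cancel and the $\ric$ and $-h$ contributions merge into $\mathcal{R}$. The only difference is bookkeeping order, not substance.
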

\begin{proof}
By direct computations and backward harmonic map heat flow equation \eqref{eq:bHMHF},
we have the following (cf. \cite[Lemma 4.5]{AH}):
\begin{align*}
\partial_{\tau}\Vert du \Vert^2
&=-\sum^{m}_{i=1}\mathfrak{g}(du((\partial_{\tau}g)(e_i)),du(e_i))+2\sum^{m}_{i=1}\mathfrak{g}(\nabla^{u^{-1}TN}_{e_i}(\partial_{\tau}u),du(e_i) )\\
&=-2\sum^{m}_{i=1}\mathfrak{g}(du(h(e_i)),du(e_i))-2\sum^{m}_{i=1}\mathfrak{g}(\nabla^{u^{-1}TN}_{e_i}\Delta u,du(e_i) )\\
&=-2\sum^{m}_{i=1}\mathfrak{g}(du(\ric(e_i)),du(e_i))-2\sum^{m}_{i=1}\mathfrak{g}(\nabla^{u^{-1}TN}_{e_i}\Delta u,du(e_i) )\\
&\qquad \qquad \qquad \qquad \qquad \qquad \quad \,\,\,\,  +2\sum^{m}_{i=1}\mathfrak{g}(du(\mathcal{R}(e_i)),du(e_i)),
\end{align*}
here $u^{-1}TN$ denotes the induced vector bundle from $TN$ by $u$,
and $\nabla^{u^{-1}TN}$ is the canonical connection over $u^{-1}TN$.
Combining the above equation and the Bochner formula of Eells-Sampson type tells us the following (see e.g., \cite[Remark 1.15]{AMR}):
\begin{align*}
\frac{1}{2}\Delta \Vert du\Vert^2 &=\Vert \nabla du\Vert^2+\sum^{m}_{i=1}\mathfrak{g}(\nabla^{u^{-1}TN}_{e_i}\Delta u,du(e_i) )+\sum^{m}_{i=1}\mathfrak{g}(du(\ric(e_i)),du(e_i))\\
                                                 &\qquad \qquad \,\,\, \, -\sum^{m}_{i,j=1}\mathfrak{g}(R(du(e_i),du(e_j))du(e_j),du(e_i))\\
                                                 &=\Vert \nabla du\Vert^2+\sum^{m}_{i=1}\mathfrak{g}(du(\mathcal{R}(e_i)),du(e_i))-\frac{1}{2}\partial_{\tau}\Vert du \Vert^2\\
                                                 &\qquad \qquad \,\,\, \,-\sum^{m}_{i,j=1}\mathfrak{g}(R(du(e_i),du(e_j))du(e_j),du(e_i)).
\end{align*}
This completes the proof.
\end{proof}

We next show the following:
\begin{lem}\label{lem:simple}
Let $(N,\mathfrak{g})$ be simply connected, and $\sec \leq 0$.
For $A>0$, we assume $2\rho\circ u\leq A$.
Set
\begin{equation}\label{eq:key function}
w:=\frac{\Vert du\Vert^2}{(A^2-\rho^2 \circ u)^2}.
\end{equation}
Then we have
\begin{align*}
(\Delta+\partial_{\tau}) w-2\frac{g(\nabla w,\nabla (\rho^2\circ u))}{A^2-\rho^2 \circ u}&\geq 4(A^2-\rho^2 \circ u)w^2\\
      &\quad +\frac{2}{(A^2-\rho^2 \circ u)^2}\sum^{m}_{i=1}\mathfrak{g}(du(\mathcal{R}(e_i)),du(e_i)).
\end{align*}
\end{lem}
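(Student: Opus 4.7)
The plan is to write $w = F\phi^{-2}$ with $F:=\Vert du\Vert^2$ and $\phi := A^2-\rho^2\circ u$, expand $(\Delta+\partial_\tau)w$ by the product rule, and reduce the target inequality to a pointwise perfect square. Two analytic inputs will enter. First, Lemma \ref{lem:easy} together with the hypothesis $\sec \leq 0$ on $N$ (which makes the target curvature term non-negative) yields
\begin{equation*}
(\Delta+\partial_\tau)F \geq 2\Vert\nabla du\Vert^2 + 2\sum_{i=1}^m \mathfrak{g}(du(\mathcal{R}(e_i)), du(e_i)).
\end{equation*}
Second, since $(N,\mathfrak{g})$ is Cartan--Hadamard, the Hessian comparison gives $\Hess \rho^2 \geq 2\mathfrak{g}$; applying the chain rule $(\Delta+\partial_\tau)(f\circ u) = \sum_i \Hess f(du(e_i), du(e_i)) + df((\Delta+\partial_\tau)u)$ to $f=\rho^2$ and using \eqref{eq:bHMHF} then yields $(\Delta+\partial_\tau)(\rho^2\circ u) \geq 2F$. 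This second inequality is the source of the main positive term on the right-hand side of the lemma, via $-2F\phi^{-3}(\Delta+\partial_\tau)\phi = 2F\phi^{-3}(\Delta+\partial_\tau)(\rho^2\circ u) \geq 4F^2\phi^{-3} = 4\phi w^2$.

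A direct product-rule expansion of $(\Delta+\partial_\tau)w$, followed by the substitution $g(\nabla w, \nabla(\rho^2\circ u)) = \phi^{-2}g(\nabla F, \nabla(\rho^2\circ u)) + 2F\phi^{-3}\Vert\nabla(\rho^2\circ u)\Vert^2$ used to trade $g(\nabla w, \nabla(\rho^2\circ u))$ for $g(\nabla F, \nabla(\rho^2\circ u))$, reduces the claim to the pointwise inequality
\begin{equation*}
\phi^2\Vert\nabla du\Vert^2 + \phi\, g(\nabla(\rho^2\circ u), \nabla F) + F\Vert\nabla(\rho^2\circ u)\Vert^2 \geq 0.
\end{equation*}
To control the sign-indefinite middle term, I will fix an orthonormal frame $\{e_i\}$ at a point of $M$, note that $(\rho^2\circ u)_{,i} = 2(\rho\circ u)\,d\rho(du(e_i))$ and $F_{,i} = 2\sum_j \mathfrak{g}(\nabla du(e_i, e_j), du(e_j))$, and apply Cauchy--Schwarz to the $m\times m$ arrays $\{d\rho(du(e_i))\,du(e_j)\}$ and $\{\nabla du(e_i,e_j)\}$. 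Combined with $|d\rho|\leq 1$, this gives $|g(\nabla(\rho^2\circ u), \nabla F)| \leq 2\Vert\nabla(\rho^2\circ u)\Vert\,\sqrt{F}\,\Vert\nabla du\Vert$, so the three-term combination collapses into the perfect square $(\phi\Vert\nabla du\Vert - \sqrt{F}\,\Vert\nabla(\rho^2\circ u)\Vert)^2 \geq 0$.

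The main obstacle is securing the exact constant $2$ in the Cauchy-Schwarz bound on the cross term: any larger constant would break the perfect-square identity and leave an uncontrolled negative contribution that the remaining $4\phi w^2$ and curvature terms cannot absorb. The rest of the argument is bookkeeping with the product rule and the two differential inequalities above.
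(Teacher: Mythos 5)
Your proposal is correct and follows essentially the same route as the paper: the same product-rule expansion of $w=\Vert du\Vert^2(A^2-\rho^2\circ u)^{-2}$, the same two inputs (Lemma \ref{lem:easy} with $\sec\leq 0$, and the Greene--Wu Hessian comparison giving $(\Delta+\partial_\tau)(\rho^2\circ u)\geq 2\Vert du\Vert^2$), and the same residual three-term quantity shown non-negative by Cauchy--Schwarz plus the Kato inequality (your completion of the square is just the paper's AM--GM step rearranged).
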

\begin{proof}
By straightforward computations,
\begin{align*}
\nabla w&=\frac{\nabla \Vert du\Vert^2}{(A^2-\rho^2 \circ u)^2}+2\frac{\Vert du\Vert^2~\nabla (\rho^2 \circ u)}{(A^2-\rho^2 \circ u)^3},\\
\Delta w&=\frac{\Delta \Vert du\Vert^2}{(A^2-\rho^2 \circ u)^2}+\frac{4g(\nabla \Vert du\Vert^2,\nabla (\rho^2 \circ u))}{(A^2-\rho^2 \circ u)^3}+\frac{2\Vert du\Vert^2~ \Delta (\rho^2 \circ u)}{(A^2-\rho^2 \circ u)^3}+\frac{6\Vert \nabla (\rho^2 \circ u) \Vert^2~\Vert du\Vert^2}{(A^2-\rho^2 \circ u)^4}\\
                &=\frac{2g(\nabla w,\nabla (\rho^2\circ u))}{A^2-\rho^2 \circ u}+\frac{2\Vert du\Vert^2~ \Delta (\rho^2 \circ u)}{(A^2-\rho^2 \circ u)^3}+\frac{\Delta \Vert du\Vert^2}{(A^2-\rho^2 \circ u)^2}\\
                &\quad +\frac{2g(\nabla \Vert du\Vert^2,\nabla (\rho^2 \circ u))}{(A^2-\rho^2 \circ u)^3}+\frac{2\Vert \nabla (\rho^2 \circ u) \Vert^2~\Vert du\Vert^2}{(A^2-\rho^2 \circ u)^4},\\
\partial_{\tau} w&=\frac{\partial_{\tau} \Vert du\Vert^2}{(A^2-\rho^2 \circ u)^2}+\frac{2\Vert du\Vert^2~\partial_{\tau} (\rho^2 \circ u)}{(A^2-\rho^2 \circ u)^3}.
\end{align*}
It follows that
\begin{align*}
(\Delta+\partial_{\tau}) w-\frac{2g(\nabla w,\nabla (\rho^2\circ u))}{A^2-\rho^2 \circ u}&=\frac{2\Vert du\Vert^2~ (\Delta+\partial_{\tau}) (\rho^2 \circ u)}{(A^2-\rho^2 \circ u)^3}+\frac{(\Delta+\partial_{\tau}) \Vert du\Vert^2}{(A^2-\rho^2 \circ u)^2}\\
                &\quad +\frac{2g(\nabla \Vert du\Vert^2,\nabla (\rho^2 \circ u))}{(A^2-\rho^2 \circ u)^3}+\frac{2\Vert \nabla (\rho^2 \circ u) \Vert^2~\Vert du\Vert^2}{(A^2-\rho^2 \circ u)^4}.
\end{align*}
Since $(N,\mathfrak{g})$ is simply connected and $\sec \leq 0$,
the Greene-Wu Hessian comparison yields the following (see e.g., \cite[(1.263)]{AMR}, and also \cite[(1.181)]{AMR}):
\begin{equation*}
(\Delta+\partial_{\tau}) (\rho^2 \circ u)=\sum^{m}_{i=1}\nabla^2 \rho^2(du(e_i),du(e_i))\geq 2\Vert du\Vert^2,
\end{equation*}
where we also used the backward harmonic map heat flow equation \eqref{eq:bHMHF}.
Furthermore,
in view of Lemma \ref{lem:easy} and $\sec \leq 0$,
\begin{equation*}
(\Delta+\partial_{\tau}) \Vert du\Vert^2 \geq 2\Vert \nabla du\Vert^2+2\sum^{m}_{i=1}\mathfrak{g}(du(\mathcal{R}(e_i)),du(e_i)).
\end{equation*}
Combining the above estimates,
we see
\begin{align*}
(\Delta+\partial_{\tau}) w-\frac{2g(\nabla w,\nabla (\rho^2\circ u))}{A^2-\rho^2 \circ u}&\geq 4(A^2-\rho^2 \circ u)w^2\\
      &\quad+\frac{2}{(A^2-\rho^2 \circ u)^2}\sum^{m}_{i=1}\mathfrak{g}(du(\mathcal{R}(e_i)),du(e_i))+2\mathcal{F},
\end{align*}
where
\begin{equation*}
\mathcal{F}:=\frac{\Vert \nabla du\Vert^2}{(A^2-\rho^2 \circ u)^2}+\frac{\Vert \nabla (\rho^2 \circ u) \Vert^2~\Vert du\Vert^2}{(A^2-\rho^2 \circ u)^4}+\frac{g(\nabla \Vert du\Vert^2,\nabla (\rho^2 \circ u))}{(A^2-\rho^2 \circ u)^3}.
\end{equation*}

Now,
it suffices to check that
$\mathcal{F}$ is non-negative.
For the first two terms,
the inequality of arithmetic-geometric means, and the Kato inequality imply
\begin{align*}
\frac{\Vert \nabla du\Vert^2}{(A^2-\rho^2 \circ u)^2}+\frac{\Vert \nabla (\rho^2 \circ u) \Vert^2~\Vert du\Vert^2}{(A^2-\rho^2 \circ u)^4}
&\geq \frac{2\Vert \nabla du\Vert \Vert \nabla (\rho^2 \circ u) \Vert \Vert du\Vert }{(A^2-\rho^2 \circ u)^3}\\
&\geq \frac{\Vert \nabla \Vert du\Vert^2 \Vert~\Vert \nabla (\rho^2 \circ u)\Vert }{(A^2-\rho^2 \circ u)^3}.
\end{align*}
The Cauchy-Schwarz inequality tells us the desired conclusion.
\end{proof}

\subsection{Cut-off arguments}\label{sec:Cut-off arguments}
Let us recall the following elementary fact:
\begin{lem}\label{lem:cutoff}
Let $R,T>0,\,\alpha \in (0,1)$.
Then there is a smooth function $\psi:[0,\infty)\times [0,\infty)\to [0,1]$ which is supported on $[0,R]\times [0,T]$,
and a constant $C_\alpha>0$ depending only on $\alpha$ such that the following hold:
\begin{enumerate}\setlength{\itemsep}{3pt}
\item $\psi\equiv 1$ on $[0,R/2]\times [0,T/4]$;
\item $\partial_r \psi \leq 0$ on $[0,\infty)\times [0,\infty)$, and $\partial_r \psi \equiv 0$ on $[0,R/2]\times [0,\infty)$;
\item we have
\begin{equation*}
         \frac{\vert \partial_r \psi \vert}{\psi^\alpha}\leq \frac{C_{\alpha}}{R},\quad \frac{\vert \partial^2_{r}\psi\vert}{\psi^\alpha}\leq \frac{C_{\alpha}}{R^2},\quad \frac{\vert \partial_{\tau} \psi\vert}{\psi^{1/2}}\leq \frac{C}{T},
\end{equation*}
where $C>0$ is a universal constant.
\end{enumerate}
\end{lem}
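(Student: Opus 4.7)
The plan is to construct $\psi$ as a product $\psi(r,\tau) = \phi(r)\,\eta(\tau)$, separating the spatial and temporal variables so that the problem reduces to two one-variable cutoffs with suitable decay. The only subtlety is that the denominators $\psi^{\alpha}$ and $\psi^{1/2}$ in item (3) force us to pass to powers of standard bump functions; otherwise the ratios would blow up where the cutoff transitions to zero.

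Concretely, I would first fix universal smooth, non-increasing functions $\phi_0, \eta_0 : [0,\infty) \to [0,1]$ with $\phi_0 \equiv 1$ on $[0,1/2]$, $\supp \phi_0 \subset [0,1]$, and $\eta_0 \equiv 1$ on $[0,1/4]$, $\supp \eta_0 \subset [0,1]$ (standard mollification gives these). Then I would define
\begin{equation*}
\phi(r) := \phi_0(r/R)^{p}, \qquad \eta(\tau) := \eta_0(\tau/T)^{q}, \qquad \psi(r,\tau) := \phi(r)\,\eta(\tau),
\end{equation*}
with integers $p \geq 2/(1-\alpha)$ and $q = 2$ to be pinned down by the computation below. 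Properties (1) and (2) are then immediate from the analogous properties of $\phi_0$, $\eta_0$ (monotonicity is preserved under positive powers).

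For (3), a direct chain-rule computation gives
\begin{equation*}
\frac{|\partial_r \psi|}{\psi^{\alpha}} = \frac{p}{R}\,\phi_0(r/R)^{\,p-1-\alpha p}\,|\phi_0'(r/R)|\,\eta(\tau)^{1-\alpha},
\end{equation*}
and a similar expression for $|\partial_r^2 \psi|/\psi^{\alpha}$ in which $\phi_0$ appears with exponents $p - 2 - \alpha p$ and $p - 1 - \alpha p$. The choice $p \geq 2/(1-\alpha)$ makes all these exponents nonnegative, so each surviving factor of $\phi_0$ is bounded by $1$; together with $\eta^{1-\alpha} \leq 1$ this yields the desired bounds with a constant $C_\alpha$ that depends on $\alpha$ only through $p$ and the universal norms $\|\phi_0'\|_\infty$, $\|\phi_0''\|_\infty$. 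For the temporal bound,
\begin{equation*}
\frac{|\partial_\tau \psi|}{\psi^{1/2}} = \phi(r)^{1/2}\,\frac{q}{T}\,\eta_0(\tau/T)^{\,q-1-q/2}\,|\eta_0'(\tau/T)|,
\end{equation*}
and with $q = 2$ the exponent $q - 1 - q/2 = 0$, so this reduces to a universal constant times $1/T$.

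There is no genuine obstacle here beyond bookkeeping: the construction is standard for Li--Yau type gradient estimates, and the only point requiring care is selecting the exponents $p, q$ large enough (relative to $\alpha$ and $1/2$ respectively) so that, after differentiating and dividing by the appropriate power of $\psi$, a nonnegative power of the base bump remains. Everything else follows by inspection.
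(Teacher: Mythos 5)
Your construction is correct: the product of integer powers $\phi_0(r/R)^p\,\eta_0(\tau/T)^q$ with $p\geq 2/(1-\alpha)$ and $q=2$ gives exactly the stated properties, and the exponent bookkeeping (requiring $p-2-\alpha p\geq 0$ for the second radial derivative and $q-1-q/2=0$ for the temporal one) is the whole content. The paper states this lemma as a recalled ``elementary fact'' without proof, and your argument is the standard construction (as in Souplet--Zhang) that it implicitly invokes.
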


We deduce the following:
\begin{prop}\label{prop:difficult}
Let $K\geq 0$.
We assume
\begin{equation*}\label{eq:cutoff assumption}
\mathcal{R}(V)\geq -K\Vert V \Vert^2,\quad \mathcal{D}(V)\geq -2K\left(H+\Vert V \Vert^2 \right),\quad \mathcal{H}(V) \geq -\frac{H}{\tau},\quad H\geq 0
\end{equation*}
for all vector fields $V$.
Let $(N,\mathfrak{g})$ be simply connected, and $\sec \leq 0$.
Let $u:M\times [0,\infty)\to N$ be a solution to backward harmonic map heat flow.
For $R,T>0$ and $A>0$,
we suppose $2\rho \circ u\leq A$ on $Q_{R,T}$.
We define $w$ as \eqref{eq:key function} on $Q_{R,T}$.
We also take a function $\psi:[0,\infty)\times [0,\infty)\to [0,1]$ in Lemma \ref{lem:cutoff} with $\alpha=3/4$,
and set
\begin{equation}\label{eq:cutoff function}
\psi(x,\tau):=\psi(\mathfrak{d}(x,\tau),\tau).
\end{equation}
Then we have
\begin{equation*}
(\psi w)^2\leq \frac{1}{A^4}\left(\frac{\overline{C}_m}{R^4}+\frac{\widetilde{C}_1}{T^2}+\widetilde{C}_2 K^2\right)+\frac{1}{A^2}\Phi.
\end{equation*}
at every point in $Q_{R,T}$ such that the reduced distance is smooth,
where for the universal constants $C_{3/4},C>0$ given in Lemma \ref{lem:cutoff},
we put
\begin{align}\label{eq:dimension epsilon}
\overline{C}_{m}&:=6C^2_{3/4}\left(m^2+\frac{9}{4}+\frac{369}{32}C^2_{3/4}\right),\quad \widetilde{C}_1:=\frac{3}{2}C^2,\quad \widetilde{C}_2:=6\left(1+\frac{C^2_{3/4}}{4}\right), \\  \label{eq:maximal constant}
\Phi&:=(\Delta+\partial_{\tau})(\psi w)-\frac{2g\left( \nabla\psi, \nabla(\psi w) \right)}{\psi}-\frac{2g(\nabla(\psi w),\nabla (\rho^2 \circ u))}{A^2-\rho^2\circ u}.
\end{align}
\end{prop}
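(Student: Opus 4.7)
The plan is to combine the differential inequality for $w$ from Lemma \ref{lem:simple} with the cutoff $\psi$ to obtain a pointwise inequality for $\psi w$, treating $\Phi$ as the leftover term.

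First, since $\mathcal{R}\geq -K g$ symmetrically, we have $\sum_i\mathfrak{g}(du(\mathcal{R}(e_i)),du(e_i))\geq -K\|du\|^2$, so the Ricci-type term in Lemma \ref{lem:simple} is bounded below by $-2Kw$. Combined with $2\rho\circ u\leq A$, which gives $4(A^2-\rho^2\circ u)\geq 3A^2$, we obtain
\begin{equation*}
(\Delta+\partial_\tau)w-\frac{2g(\nabla w,\nabla(\rho^2\circ u))}{A^2-\rho^2\circ u}\geq 3A^2 w^2-2Kw.
\end{equation*}
Multiplying by $\psi$ and expanding via the product identities $(\Delta+\partial_\tau)(\psi w)=\psi(\Delta+\partial_\tau)w+w(\Delta+\partial_\tau)\psi+2g(\nabla\psi,\nabla w)$ and $\nabla(\psi w)=\psi\nabla w+w\nabla\psi$ rewrites the multiplied left-hand side as $\Phi$ plus three cutoff-error terms involving $w(\Delta+\partial_\tau)\psi$, $w\|\nabla\psi\|^2/\psi$, and $wg(\nabla\psi,\nabla(\rho^2\circ u))/(A^2-\rho^2\circ u)$. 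A further multiplication by $\psi$ produces $3A^2(\psi w)^2$ on the left.

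The technical heart is estimating the cutoff-error terms. Since $\psi=\psi(\mathfrak{d}(x,\tau),\tau)$, the chain rule reduces them to $|\partial_r\psi|,|\partial_r^2\psi|,|\partial_\tau\psi|$ (Lemma \ref{lem:cutoff}), combined with $\|\nabla\mathfrak{d}\|^2\leq 3$ (Lemma \ref{lem:reduced gradient estimate}) and $(\Delta+\partial_\tau)\mathfrak{d}\leq m/\mathfrak{d}+K\mathfrak{d}$, which follows from Lemma \ref{lem:reduced heat estimate2} applied to $\overline{L}=\mathfrak{d}^2$. The singular factor $m/\mathfrak{d}$ is harmless because $\partial_r\psi$ is supported on $\{\mathfrak{d}\geq R/2\}$, where $m/\mathfrak{d}\leq 2m/R$. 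For the gradient-transport term I would use Kato's inequality together with $\|\nabla(\rho^2\circ u)\|\leq 2\rho\|du\|\leq A\sqrt{w}(A^2-\rho^2\circ u)$ to produce a factor of $(\psi w)^{3/2}$ after simplification.

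Finally, each cutoff error takes the form $c_i\psi^{\alpha_i}w^{\beta_i}$ with $\beta_i\in\{1,3/2\}$. Applying Young's inequality with exponents $(2,2)$ for the linear terms and $(4/3,4)$ for the three-halves term absorbs small multiples $\epsilon(\psi w)^2$ into the left-hand side, leaving residuals of order $1/R^4$, $1/T^2$, and $K^2$, all divided by $A^4$. After dividing by $A^2$ and using $\psi\leq 1$ to bound $\psi\Phi$ by $\Phi$, one obtains the stated inequality, with $\overline{C}_m$, $\widetilde{C}_1$, $\widetilde{C}_2$ emerging from the Young constants; the $m^2$ in $\overline{C}_m$ is the Young square of the $2m/R$ factor coming from $(\Delta+\partial_\tau)\mathfrak{d}$. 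The main obstacle is the bookkeeping: the $\epsilon$'s must be chosen so that the absorbed coefficient equals exactly $2$ on the left, which pins down the precise constants.
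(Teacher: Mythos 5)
Your proposal follows the paper's proof essentially step for step: expand $\Phi$ via the product rule, invoke Lemma \ref{lem:simple} so that the leading term $4(A^2-\rho^2\circ u)\psi w^2\geq 3A^2\psi w^2$ appears, bound the Ricci-type term by $2K\psi w$, control the cutoff errors through Lemmas \ref{lem:reduced heat estimate2}, \ref{lem:reduced gradient estimate} and \ref{lem:cutoff} (with the support of $\partial_r\psi$ taming the $m/\mathfrak{d}$ singularity exactly as you say), and absorb via Young's inequality with exponents $(2,2)$ for the linear terms and $(4/3,4)$ for the $w^{3/2}$ term. One step, however, does not work as written: you propose to multiply the inequality by $\psi$ a second time so as to produce $(\psi w)^2$ on the left, and then to ``use $\psi\leq 1$ to bound $\psi\Phi$ by $\Phi$.'' That bound requires $\Phi\geq 0$, which is not known; indeed, at the maximum point where the proposition is later applied one has $\Phi\leq 0$, so the inequality $\psi\Phi\leq\Phi$ would go the wrong way precisely where it matters. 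The paper avoids this entirely: it never multiplies by $\psi$ again, keeps $\psi w^2$ on the left-hand side throughout, and only at the very end uses $(\psi w)^2=\psi^2 w^2\leq \psi w^2$, which needs no information on the sign of $\Phi$. With that one-line correction (and noting that the $\Psi_4$ estimate needs only the chain rule $\Vert\nabla(\rho^2\circ u)\Vert\leq 2\rho\Vert du\Vert$ and Cauchy--Schwarz --- the Kato inequality enters earlier, inside Lemma \ref{lem:simple}), your argument reproduces the proposition; pinning down the exact constants $\overline{C}_m$, $\widetilde{C}_1$, $\widetilde{C}_2$ is then the arithmetic of sending $\epsilon\to A^2/6$.
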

\begin{proof}
In virtue of Lemma \ref{lem:simple},
\begin{align*}
\Phi&=\psi \left(\Delta+\partial_{\tau} \right)w-\frac{2\psi g(\nabla w,\nabla (\rho^2\circ u))}{A^2-\rho^2 \circ u}
+w\left(\Delta+\partial_{\tau}\right)\psi-\frac{2w\Vert \nabla \psi \Vert^2}{\psi}-\frac{2w g(\nabla \psi,\nabla (\rho^2\circ u))}{A^2-\rho^2 \circ u}\\
    &\geq 4(A^2-\rho^2 \circ u)\psi w^2+\frac{2\psi }{(A^2-\rho^2 \circ u)^2}\sum^{m}_{i=1}\mathfrak{g}(du(\mathcal{R}(e_i)),du(e_i))
    +w\left(\Delta+\partial_{\tau} \right)\psi-\frac{2w\Vert \nabla \psi \Vert^2}{\psi}\\
    &\qquad \qquad \qquad \qquad \qquad \qquad \qquad \qquad \qquad \qquad \qquad \qquad\qquad \quad \, -2\frac{w g(\nabla \psi,\nabla (\rho^2\circ u))}{A^2-\rho^2 \circ u}.
\end{align*}
It follows that
\begin{equation}\label{eq:main difficult}
4(A^2-\rho^2 \circ u)\psi w^2 \leq \Psi_1+\Psi_2+\Psi_3+\Psi_4+\Phi
\end{equation}
for
\begin{align*}
\Psi_1&:=-\frac{2\psi }{(A^2-\rho^2 \circ u)^2}\sum^{m}_{i=1}\mathfrak{g}(du(\mathcal{R}(e_i)),du(e_i)),\quad \Psi_2:=-w \left(\Delta+\partial_{\tau}\right)\psi,\\ \Psi_3&:=\frac{2w\Vert \nabla \psi \Vert^2}{\psi},\quad \Psi_4:=\frac{2w g(\nabla \psi,\nabla (\rho^2\circ u))}{A^2-\rho^2 \circ u}.
\end{align*}

We provide upper bounds of $\Psi_1, \Psi_2, \Psi_3,\Psi_4$.
The following Young inequality plays a crucial role:
For all $p,q\in (1,\infty)$ with $p^{-1}+q^{-1}=1$, $a,b\geq 0$, and $\epsilon>0$,
\begin{equation}\label{eq:Young}
ab\leq \frac{\epsilon a^p}{p}+\frac{b^q}{\epsilon^{q/p} q}.
\end{equation} 
The inequality
\begin{equation}\label{eq:grad est cutoff}
\frac{\Vert \nabla \psi \Vert^2}{\psi^{3/2}}\leq \frac{3 C^2_{3/4}}{R^2}
\end{equation}
is also useful,
which follows from Lemmas \ref{lem:reduced gradient estimate} and \ref{lem:cutoff}.
We first study an upper bound of $\Psi_1$.
By the assumption for $\mathcal{R}(V)$,
the Young inequality \eqref{eq:Young} with $p,q=2$,
and $\psi\leq 1$,
\begin{equation}\label{eq:estimate of C_1}
\Psi_1=-\frac{2\psi }{(A^2-\rho^2 \circ u)^2}\sum^{m}_{i=1}\mathfrak{g}(du(\mathcal{R}(e_i)),du(e_i)) \leq 2K \psi w\leq \epsilon \psi^2 w^2+\frac{K^2}{\epsilon}\leq \epsilon \psi w^2+\frac{K^2}{\epsilon}.
\end{equation}
We next produce an upper bound of $\Psi_2$.
We see
\begin{align*}
\Psi_2&=-w \left(\Delta+\partial_{\tau}\right)\psi=-w  \left(           \partial_r \psi(\Delta+\partial_{\tau})\mathfrak{d}+\partial^2_r \psi \Vert \nabla \mathfrak{d} \Vert^2+\partial_{\tau} \psi \right)\\ \notag
                  &=-w  \left[           \partial_r \psi   \left(  \frac{1}{2\mathfrak{d}}(\Delta+\partial_{\tau})\overline{L}-\frac{\Vert \nabla \overline{L} \Vert^2}{4\mathfrak{d}^3}  \right)+\partial^2_r \psi \Vert \nabla \mathfrak{d} \Vert^2+\partial_{\tau} \psi \right]\\ \notag
                  &=\frac{w \vert \partial_r \psi \vert}{2\mathfrak{d}}(\Delta+\partial_{\tau})\overline{L}-w \vert \partial_r \psi \vert \frac{\Vert \nabla \overline{L} \Vert^2}{4\mathfrak{d}^3}-w \,\partial^2_r \psi \Vert \nabla \mathfrak{d} \Vert^2-w\,\partial_{\tau} \psi\\ \notag
                  & \leq \frac{w \vert \partial_r \psi \vert}{2\mathfrak{d}}(\Delta+\partial_{\tau})\overline{L}+w \vert \partial^2_r \psi\vert \Vert \nabla \mathfrak{d} \Vert^2+w \,\vert \partial_{\tau} \psi \vert.
\end{align*}
Lemmas \ref{lem:reduced heat estimate2}, \ref{lem:reduced gradient estimate} and $\overline{L}=\mathfrak{d}^2$ yield
\begin{align*}
\Psi_2&\leq m \frac{w \vert \partial_r \psi \vert}{\mathfrak{d}}+K w \vert \partial_r \psi \vert \mathfrak{d}+3w \vert \partial^2_r \psi \vert+w \, \vert \partial_{\tau} \psi \vert \\ \notag
                       &\leq \frac{2m}{R} w \vert \partial_r \psi \vert+KR w \vert \partial_r \psi \vert+3w \vert \partial^2_r \psi \vert+w \,\vert \partial_{\tau} \psi \vert,
\end{align*}
where in the second inequality,
we used the fact that
$\partial_r \psi$ vanishes on $[0,R/2]\times [0,\infty)$.
From the Young inequality \eqref{eq:Young} with $p,q=2$,
Lemma \ref{lem:cutoff},
and $\psi\leq 1$,
we derive
\begin{align}\label{eq:estimate of A}
\Psi_2&\leq \left( \epsilon  \psi w^2+\frac{m^2}{R^2}\frac{ \vert \partial_r \psi \vert^2}{\epsilon  \psi}   \right)
+\left( \epsilon \psi w^2+\frac{K^2 R^2}{4}\frac{ \vert \partial_r \psi \vert^2}{\epsilon \psi}   \right)\\ \notag
&\qquad \qquad \qquad \qquad \qquad \,\,\,+\left( \epsilon \psi w^2+\frac{9}{4}\frac{ \vert \partial^2_r \psi \vert^2}{\epsilon \psi}   \right)
+\left( \epsilon \psi w^2+\frac{1}{4}\frac{ \vert \partial_{\tau}  \psi\vert^2}{\epsilon \psi}   \right) \\ \notag
&\leq 4\epsilon  \psi w^2+\frac{C^2_{3/4}}{\epsilon}\left(m^2+\frac{9}{4}\right)\frac{ \psi^{1/2}}{R^4}+\frac{C^2}{4\epsilon}\frac{1}{T^2}+\frac{C^2_{3/4}}{4\epsilon}K^2\psi^{1/2}\\ \notag
&\leq 4\epsilon  \psi w^2+\frac{C^2_{3/4}}{\epsilon}\left(m^2+\frac{9}{4}\right)\frac{1}{R^4}+\frac{C^2}{4\epsilon}\frac{1}{T^2}+\frac{C^2_{3/4}}{4\epsilon}K^2.
\end{align}
We give an upper bound of $\Psi_3$.
By the Young inequality \eqref{eq:Young} with $p,q=2$,
and \eqref{eq:grad est cutoff},
\begin{equation}\label{eq:estimate of B}
\Psi_3= \frac{2w\Vert \nabla \psi \Vert^2}{\psi} \leq \epsilon \psi w^2+\frac{\Vert \nabla \psi \Vert^4}{\epsilon \psi^3}\leq \epsilon \psi w^2+\frac{9C^4_{3/4}}{\epsilon}\frac{1}{R^4}.
\end{equation}
We finally examine $\Psi_4$.
The Cauchy-Schwarz inequality,
the Young inequality (\ref{eq:Young}) with $p=4/3,q=4,\epsilon=4/3$,
and \eqref{eq:grad est cutoff} lead us to
\begin{align}\label{eq:estimate of C}
\Psi_4&=2\frac{w g(\nabla \psi,\nabla (\rho^2\circ u))}{A^2-\rho^2 \circ u}\leq \frac{2w\Vert \nabla \psi \Vert \Vert \nabla (\rho^2 \circ u) \Vert}{A^2-\rho^2 \circ u}\leq 2 A w^{3/2} \Vert \nabla \psi \Vert\\ \notag
                  &\leq A^2\psi w^2+\frac{27}{16} \frac{1}{A^2}  \frac{\Vert \nabla \psi \Vert^4}{\psi^3}\leq A^2\psi w^2+\frac{243C^4_{3/4}}{16} \,\frac{1}{A^2}\frac{1}{R^4}.
\end{align}

By summarizing (\ref{eq:main difficult}), (\ref{eq:estimate of C_1}), (\ref{eq:estimate of A}), (\ref{eq:estimate of B}), (\ref{eq:estimate of C}),
\begin{align*}
3A^2\psi w^2 &\leq 4(A^2-\rho^2 \circ u)\psi w^2\\
&\leq (6\epsilon+A^2)  \psi w^2+\frac{C^2_{3/4}}{\epsilon}\left(m^2+\frac{9}{4}+9C^2_{3/4}+  \frac{243 \epsilon C^2_{3/4}}{16} \,\frac{1}{A^2}   \right)\frac{1}{R^4}\\
&\qquad \qquad \qquad \quad \, \,+\frac{C^2}{4\epsilon}\frac{1}{T^2}+\frac{1}{\epsilon}\left(1+\frac{C^2_{3/4}}{4}\right)K^2+\Phi.
\end{align*}
 Letting $\epsilon \to A^2/6$,
we have
\begin{equation*}
\psi w^2 \leq \frac{1}{A^4}\left(\frac{\overline{C}_m}{R^4}+\frac{\widetilde{C}_1}{T^2}+\widetilde{C}_2 K^2\right)+\frac{1}{A^2}\Phi.
\end{equation*}
Since $(\psi w)^2\leq \psi w^2$,
we arrive at the desired inequality.
\end{proof}

\subsection{Proof of Theorems \ref{thm:main1} and \ref{thm:gradient1}}\label{sec:Proof of main theorems}

Let us conclude Theorem \ref{thm:gradient1}.
\begin{proof}[Proof of Theorem \ref{thm:gradient1}]
For $K\geq 0$,
let $(M,g(\tau))_{\tau \in [0,\infty)}$ be backward $(-K)$-super Ricci flow satisfying \eqref{eq:Kmain assumption} for all vector fields $V$.
Let $(N,\mathfrak{g})$ be simply connected, and $\sec \leq 0$.
Let $u:M\times [0,\infty)\to N$ be a solution to backward harmonic map heat flow.
For $R,T>0$ and $A>0$,
we suppose $2\rho \circ u\leq A$ on $Q_{R,T}$.
We define functions $w$ and $\psi$ as \eqref{eq:key function} and \eqref{eq:cutoff function},
respectively.
For $\theta>0$
we define a compact subset $Q_{R,T,\theta}$ of $Q_{R,T}$ by
\begin{equation}\label{eq:compact}
Q_{R,T,\theta}:=\{(x,\tau)\in Q_{R,T} \mid \tau\in [\theta,T]\}.
\end{equation}
Fix a small $\theta\in (0,T/4)$,
and take a maximum point $(\overline{x},\overline{\tau})$ of $\psi w$ in $Q_{R,T,\theta}$.
By virtue of the Calabi argument,
we may assume that
the reduced distance is smooth at $(\overline{x},\overline{\tau})$ (cf. \cite[Remark 3.3]{KuS}).
Using Proposition \ref{prop:difficult},
we see
\begin{equation*}
(\psi w)^2\leq \frac{c_m}{A^4} \left(\frac{1}{R^4}+\frac{1}{T^2}+K^2 \right)+\frac{1}{A^2}\Phi
\end{equation*}
at $(\overline{x},\overline{\tau})$ for
\begin{equation*}
c_{m}:=\max \left\{\overline{C}_{m},\widetilde{C}_1,\widetilde{C}_2 \right\},
\end{equation*}
where $\overline{C}_{m},\widetilde{C}_1,\widetilde{C}_2>0$ and $\Phi$ are defined as $(\ref{eq:dimension epsilon})$ and (\ref{eq:maximal constant}),
respectively.
On the other hand,
since $(\overline{x},\overline{\tau})$ is a maximum point,
\begin{equation*}
\Delta(\psi w)\leq 0,\quad \partial_{\tau} (\psi w)\leq 0,\quad \nabla (\psi w)=0
\end{equation*}
at $(\overline{x},\overline{\tau})$;
in particular, $\Phi(\overline{x},\overline{\tau})\leq 0$.
Therefore,
\begin{equation*}\label{eq:smooth conclusion}
(\psi w)(x,\tau) \leq (\psi w)(\overline{x},\overline{\tau})\leq \frac{c^{1/2}_{m}}{A^2}  \left(\frac{1}{R^4}+\frac{1}{T^2}+K^2     \right)^{1/2}\leq \frac{c^{1/2}_{m}}{A^2} \left(\frac{1}{R^2}+\frac{1}{T}+K\right)
\end{equation*}
for all $(x,\tau)\in Q_{R,T,\theta}$.
By $\psi \equiv 1$ on $Q_{R/2,T/4,\theta}$,
and by the definition of $w$,
\begin{equation*}
\frac{\Vert d u \Vert}{A^2-\rho^2\circ u} \leq \frac{c^{1/4}_{m}}{A} \left(\frac{1}{R}+\frac{1}{\sqrt{T}}+\sqrt{K}\right)
\end{equation*}
on $Q_{R/2,T/4,\theta}$.
Letting $\theta\to 0$,
we complete the proof of Theorem \ref{thm:gradient1}.
\end{proof}

We are now in a position to show Theorem \ref{thm:main1}.
\begin{proof}[Proof of Theorem \ref{thm:main1}]
Let $(M,g(\tau))_{\tau \in [0,\infty)}$ be backward super Ricci flow satisfying $(\ref{eq:main assumption})$ for all vector fields $V$.
Let $(N,\mathfrak{g})$ be simply connected, and $\sec \leq 0$.
Let $u:M\times [0,\infty)\to N$ be a solution to backward harmonic map heat flow.
For $R>0$
we put 
\begin{equation*}
A_{R}:=\sup_{Q_{R,R^2}}2\rho \circ u.
\end{equation*}
In view of the growth condition,
$A_{R}=o(R)$ as $R\to \infty$.
For a fixed $(x,\tau)\in M \times (0,\infty)$,
we possess $(x,\tau)\in Q_{R/2,R^2/4}$ for every sufficiently large $R>0$,
and fix such one.
From Theorem \ref{thm:gradient1} with $K=0$,
we derive
\begin{equation*}
\frac{\Vert du \Vert}{A^2_R}\leq \frac{\Vert d u \Vert}{A^2_R-\rho^2\circ u} \leq \frac{2C_m}{A_R R}
\end{equation*}
at $(x,\tau)$.
Letting $R\to \infty$,
we complete the proof of Theorem \ref{thm:main1}.
\end{proof}

One can derive the following result from the Hamilton's trace Harnack inequality (see \cite[Corollary 1.2]{H2}, and cf. \cite[Corollary 2.5]{KuS}):
\begin{cor}\label{cor:cor1}
Let $(M,g(\tau))_{\tau \in [0,\infty)}$ be a complete backward Ricci flow with bounded, non-negative curvature operator.
Let $(N,\mathfrak{g})$ be a complete, simply connected Riemannian manifold with $\sec \leq 0$.
Let $u:M\times [0,\infty)\to N$ be a solution to backward harmonic map heat flow.
If
\begin{equation*}
\rho(u(x,\tau))=o\left(\mathfrak{d}(x,\tau)+\sqrt{\tau}\right)
\end{equation*}
near infinity, then $u$ is constant.
\end{cor}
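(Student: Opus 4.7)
The plan is to obtain Corollary \ref{cor:cor1} as a direct corollary of Theorem \ref{thm:main1} by verifying each of its hypotheses in the stronger setting of an ancient Ricci flow (reversed in time) with bounded, non-negative curvature operator.

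First I would check the structural hypotheses. A backward Ricci flow satisfies $h = \ric$, so it is in particular a backward super Ricci flow. Bounded curvature operator implies bounded $\ric$, hence bounded $h$, which yields admissibility (one may take any constant bound for $c_\tau$ on $[0,\tau]$). Non-negativity of the curvature operator implies non-negativity of the scalar curvature, i.e., $H = \tr h = R \geq 0$.

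Next I would verify the two curvature-type hypotheses in \eqref{eq:main assumption}. For the M\"uller quantity, using $h = \ric$ together with the backward-time evolution equation $-\partial_\tau R = \Delta R + 2\|\ric\|^2$ and the contracted second Bianchi identity $2\Div\ric = \nabla R$, the three groupings of terms in
\begin{equation*}
\mathcal{D}(V) = \bigl(-\partial_\tau R - \Delta R - 2\|\ric\|^2\bigr) + \bigl(4\Div h(V) - 2g(\nabla R, V)\bigr) + 2\bigl(\ric(V,V) - h(V,V)\bigr)
\end{equation*}
each vanish identically, so $\mathcal{D}(V)\equiv 0$. For the trace Harnack quantity, $\mathcal{H}(V)\geq -H/\tau$ is equivalent to
\begin{equation*}
-\partial_\tau R - 2g(\nabla R, V) + 2\ric(V,V) \geq 0,
\end{equation*}
which, after switching to forward time $t=-\tau$ and replacing $V \mapsto -V$ (permissible since the bound must hold for every vector field), is exactly Hamilton's trace Harnack inequality for complete ancient Ricci flows with bounded, non-negative curvature operator, established in \cite{H2}.

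With all hypotheses of Theorem \ref{thm:main1} in force and the growth condition on $\rho\circ u$ being identical to the one assumed there, Theorem \ref{thm:main1} applies and yields that $u$ is constant. I do not anticipate a genuine obstacle here: the only point deserving attention is the translation of Hamilton's Harnack from forward to backward time and the symmetric role of $V$ and $-V$, which is a routine sign-tracking exercise.
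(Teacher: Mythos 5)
Your proposal is correct and is essentially the paper's intended argument: the authors derive Corollary \ref{cor:cor1} from Theorem \ref{thm:main1} by citing Hamilton's trace Harnack inequality (and the analogous verification in \cite{KuS}), and your check that $h=\ric$ forces $\mathcal{D}(V)\equiv 0$ via the scalar curvature evolution and the contracted Bianchi identity, while $\mathcal{H}(V)\geq -H/\tau$ reduces to the trace Harnack estimate, is exactly that verification spelled out.
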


\section{Proof of Theorem \ref{thm:main2}}
We next prove Theorem \ref{thm:main2}.
The key is the following:
\begin{thm}\label{thm:gradient2}
For $K\geq 0$,
let $(M,g(\tau))_{\tau \in [0,\infty)}$ be an $m$-dimensional, admissible, complete backward $(-K)$-super Ricci flow.
We assume 
\begin{equation}\label{eq:Kmain assumption2}
\mathcal{D}(V)\geq -2K\left(H+\Vert V\Vert^2   \right),\quad \mathcal{H}(V) \geq -\frac{H}{\tau},\quad H\geq 0
\end{equation}
for all vector fields $V$.
Let $(N,\mathfrak{g})$ denote a complete Riemannian manifold with $\sec \leq \kappa$ for $\kappa>0$.
Assume that $B_{\pi/2\sqrt{\kappa}}(y_0)$ does not meet $\cut(y_0)$.
Let $u:M\times [0,\infty)\to N$ be a solution to backward harmonic map heat flow.
Suppose that the image of $u$ is contained in $B_{\pi/2\sqrt{\kappa}}(y_0)$.
For $R,T>0$,
let
\begin{equation}\label{eq:new key function}
\varphi:=1-\cos \sqrt{\kappa}\rho,\quad A:=\frac{1}{2}\left(1+\sup_{Q_{R,T}}\varphi \circ u  \right).
\end{equation}
Then there is a positive constant $C_{m}>0$ depending only on $m$ such that on $Q_{R/2,T/4}$,
\begin{equation*}
\frac{\Vert du \Vert}{A-\varphi \circ u}\leq \frac{C_{m}}{\sqrt{\kappa}} \left( \frac{1}{R}+\frac{1}{\sqrt{T}}+\sqrt{K}  \right)\sup_{Q_{R,T}}\left(\frac{1}{\cos\sqrt{\kappa}\rho \circ u}\right)^2.
\end{equation*}
\end{thm}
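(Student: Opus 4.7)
The approach mirrors that of Theorem \ref{thm:gradient1}, with the squared-distance $\rho^2$ on $N$ replaced by $\varphi:=1-\cos\sqrt{\kappa}\rho$, which plays the analogous role for $\sec\leq\kappa$ that $\rho^2$ plays for $\sec\leq 0$. Set $\Lambda:=A-\varphi\circ u$, which is strictly positive on $Q_{R,T}$ by the choice of $A$, and put $w:=\|du\|^2/\Lambda^2$. First I would derive a differential inequality for $w$ analogous to Lemma \ref{lem:simple}, then run the cut-off/maximum-principle argument of Proposition \ref{prop:difficult}, and finally translate surplus powers of $(1-A)$ into the factor $\sup(1/\cos\sqrt{\kappa}\rho\circ u)^2$ appearing in the conclusion.

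Two geometric inputs drive the differential inequality. First, on the regular ball $B_{\pi/2\sqrt{\kappa}}(y_0)$, the Greene--Wu Hessian comparison under $\sec\leq\kappa$ yields $\Hess\varphi\geq\kappa(1-\varphi)\mathfrak{g}$, which along $u$ gives $(\Delta+\partial_\tau)(\varphi\circ u)\geq\kappa(1-\varphi)\|du\|^2$ via \eqref{eq:bHMHF}. Second, the Bochner identity of Lemma \ref{lem:easy}, combined with $\sec\leq\kappa$ and $\mathcal{R}\geq-Kg$, gives
\begin{equation*}
(\Delta+\partial_\tau)\|du\|^2\geq 2\|\nabla du\|^2-2K\|du\|^2-2\kappa\|du\|^4.
\end{equation*}
Expanding $(\Delta+\partial_\tau)w$ exactly as in the proof of Lemma \ref{lem:simple} and exploiting the algebraic identity $(1-\varphi)-\Lambda=1-A>0$, the two curvature contributions amalgamate to produce the key inequality
\begin{equation*}
(\Delta+\partial_\tau)w-\frac{2g(\nabla w,\nabla(\varphi\circ u))}{\Lambda}\geq 2\kappa(1-A)\Lambda w^2-2Kw+2\mathcal{F},
\end{equation*}
with $\mathcal{F}\geq 0$ following from the Kato and Cauchy--Schwarz inequalities as in Lemma \ref{lem:simple}.

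Next, applying the cut-off $\psi$ of Lemma \ref{lem:cutoff} as in Proposition \ref{prop:difficult}, the four auxiliary terms $\Psi_1,\Psi_2,\Psi_3,\Psi_4$ are bounded by Young's inequality together with Lemmas \ref{lem:reduced heat estimate2} and \ref{lem:reduced gradient estimate}; the only modification is that $\|\nabla(\varphi\circ u)\|\leq\sqrt{\kappa}\|du\|$ replaces $\|\nabla(\rho^2\circ u)\|\leq 2\rho\|du\|$, which substitutes $\sqrt{\kappa}$ for $A$ in the estimate of $\Psi_4$. Tuning the Young parameters to $\epsilon\sim\kappa(1-A)\Lambda$ so that each $\Psi_i$ absorbs a controlled fraction of the leading term $\kappa(1-A)\Lambda\psi w^2$, applying the Calabi trick to assume smoothness of $\ell$ at a maximum point of $\psi w$ on $Q_{R,T,\theta}$, and invoking $\Phi\leq 0$ there (cf. \eqref{eq:maximal constant}), produces after using $\Lambda\geq 1-A$ in the error estimates
\begin{equation*}
\psi w^2\leq\frac{C_m}{\kappa^2(1-A)^8}\left(\frac{1}{R^4}+\frac{1}{T^2}+K^2\right).
\end{equation*}

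The step I expect to be the main obstacle is the bookkeeping of the negative powers of $1-A$: each replacement $\Lambda\geq 1-A$ in the Young bounds for the $\Psi_i$ contributes a factor $(1-A)^{-1}$, and one must verify that the combined exponent equals the budget $8$ needed to produce exactly $(1-A)^{-2}$ after taking the square root of $\psi w$. Once the displayed estimate is in hand, substituting $1-A=\tfrac{1}{2}\inf_{Q_{R,T}}\cos\sqrt{\kappa}\rho\circ u$, taking square roots, passing to the subset $Q_{R/2,T/4}$ where $\psi\equiv 1$, and letting the auxiliary time parameter $\theta\to 0$ yields $\sqrt{w}\leq\tfrac{C_m}{\sqrt{\kappa}}\bigl(R^{-1}+T^{-1/2}+\sqrt{K}\bigr)\sup_{Q_{R,T}}(1/\cos\sqrt{\kappa}\rho\circ u)^2$ on $Q_{R/2,T/4}$, which is precisely the stated conclusion.
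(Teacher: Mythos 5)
Your proposal is correct and follows essentially the same route as the paper: the same test function $w=\Vert du\Vert^2/(A-\varphi\circ u)^2$, the Hessian comparison $\Hess\varphi\geq\kappa(1-\varphi)\mathfrak{g}$ combined with the Bochner formula to produce the leading term $2\kappa(1-A)(A-\varphi\circ u)w^2$ via the identity $(1-\varphi)-(A-\varphi)=1-A$, and the same cut-off/maximum-principle argument with the Young parameter tuned to the leading coefficient. Your bookkeeping of the powers of $1-A$ is consistent with the paper's (which tracks $\delta=(1-A)(A-\varphi(u(\overline{x},\overline{\tau})))\geq(1-A)^2$ instead), and both yield the factor $\sup(1/\cos\sqrt{\kappa}\rho\circ u)^2$ in the final estimate.
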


Unlike Theorem \ref{thm:gradient1},
this estimate seems to be new even in the context of Liouville theorems for harmonic maps (see Remark \ref{rem:new point}).

\subsection{Backward harmonic map heat flows}

Let us show the following:
\begin{lem}\label{lem:simple2}
Let $(N,\mathfrak{g})$ be $\sec \leq \kappa$ for $\kappa>0$.
Assume that $B_{\pi/2\sqrt{\kappa}}(y_0)$ does not meet $\cut(y_0)$.
Let $u:M\times [0,\infty)\to N$ be a solution to backward harmonic map heat flow.
Suppose that the image of $u$ is contained in $B_{\pi/2\sqrt{\kappa}}(y_0)$.
For $R,T>0$,
we define $\varphi$ and $A$ as \eqref{eq:new key function}.
Set
\begin{equation}\label{eq:key function2}
w:=\frac{\Vert du\Vert^2}{(A-\varphi \circ u)^2}.
\end{equation}
Then we have
\begin{align*}
(\Delta+\partial_{\tau}) w-2\frac{g(\nabla w,\nabla (\varphi \circ u))}{A-\varphi \circ u}&\geq 2\kappa(1-A)(A-\varphi \circ u)w^2\\
      &\quad +\frac{2}{(A-\varphi \circ u)^2}\sum^{m}_{i=1}\mathfrak{g}(du(\mathcal{R}(e_i)),du(e_i)).
\end{align*}
\end{lem}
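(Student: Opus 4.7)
The plan is to mirror the proof of Lemma~\ref{lem:simple}, substituting $\rho^2\circ u$ by $\varphi\circ u = (1-\cos\sqrt{\kappa}\rho)\circ u$. This is a Jäger--Kaul-type substitution: inside the regular ball $B_{\pi/2\sqrt{\kappa}}(y_0)$ the function $\varphi$ is strictly convex and bounded above by $1$, which will exactly compensate for the positive target-curvature term that now survives in the Bochner identity of Lemma~\ref{lem:easy}.

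First I would differentiate $w$ as defined in \eqref{eq:key function2} and combine expressions for $\nabla w$, $\Delta w$, $\partial_\tau w$ exactly as in the proof of Lemma~\ref{lem:simple}; this is purely formal and yields the identity
\begin{equation*}
(\Delta+\partial_\tau)w-\frac{2g(\nabla w,\nabla(\varphi\circ u))}{A-\varphi\circ u}
=\frac{2\|du\|^2\,(\Delta+\partial_\tau)(\varphi\circ u)}{(A-\varphi\circ u)^3}+\frac{(\Delta+\partial_\tau)\|du\|^2}{(A-\varphi\circ u)^2}+2\mathcal{F},
\end{equation*}
where $\mathcal{F}$ is the same cross expression whose nonnegativity was already proved via Young plus Kato in Lemma~\ref{lem:simple}, and so can be quoted verbatim.

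Next I would estimate the two principal contributions. A direct calculation gives $\Hess\varphi=\kappa\cos\sqrt{\kappa}\rho\,d\rho\otimes d\rho+\sqrt{\kappa}\sin\sqrt{\kappa}\rho\,\Hess\rho$. Since $B_{\pi/2\sqrt{\kappa}}(y_0)$ avoids $\cut(y_0)$, the Greene--Wu Hessian comparison for $\sec\leq\kappa$ yields $\Hess\rho\geq\sqrt{\kappa}\cot(\sqrt{\kappa}\rho)(\mathfrak{g}-d\rho\otimes d\rho)$ on this ball, and the two pieces collapse to $\Hess\varphi\geq\kappa\cos\sqrt{\kappa}\rho\,\mathfrak{g}=\kappa(1-\varphi)\mathfrak{g}$. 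Since $(\Delta+\partial_\tau)u=0$, the chain rule gives $(\Delta+\partial_\tau)(\varphi\circ u)=\sum_i\Hess\varphi(du(e_i),du(e_i))\geq\kappa(1-\varphi\circ u)\|du\|^2$. For $\|du\|^2$, I would invoke Lemma~\ref{lem:easy} and bound the target-curvature term by $\sec\leq\kappa$ via $\sum_{i,j}\mathfrak{g}(R(du(e_i),du(e_j))du(e_j),du(e_i))\leq\kappa\|du\|^4$, giving
\begin{equation*}
(\Delta+\partial_\tau)\|du\|^2\geq 2\|\nabla du\|^2+2\sum_{i=1}^m\mathfrak{g}(du(\mathcal{R}(e_i)),du(e_i))-2\kappa\|du\|^4.
\end{equation*}

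Inserting both bounds into the identity above and exploiting the key algebraic cancellation
\begin{equation*}
\frac{2\kappa(1-\varphi\circ u)\|du\|^4}{(A-\varphi\circ u)^3}-\frac{2\kappa\|du\|^4}{(A-\varphi\circ u)^2}=\frac{2\kappa\bigl[(1-\varphi\circ u)-(A-\varphi\circ u)\bigr]\|du\|^4}{(A-\varphi\circ u)^3}=2\kappa(1-A)(A-\varphi\circ u)w^2
\end{equation*}
produces precisely the leading term of the asserted inequality; the $\mathcal{R}$ contribution transfers verbatim, the $2\|\nabla du\|^2$ piece is absorbed into $2\mathcal{F}\geq 0$, and the proof concludes. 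The delicate point is the Hessian-comparison step together with the specific normalization $A=\tfrac12(1+\sup_{Q_{R,T}}\varphi\circ u)$: because $\varphi<1$ on the ball one has $A<1$, so $A-\varphi\circ u\geq 1-A>0$, ensuring that the factor $(1-A)$ appearing in the curvature cancellation is nontrivially positive and that $w$ is well defined. Any other choice of $A$ would either lose the sign of $1-A$ or fail to produce a clean factorization.
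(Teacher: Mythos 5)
Your proposal is correct and follows essentially the same route as the paper: the same formal computation for $w$, the Hessian comparison $\nabla^2\varphi\geq\kappa\cos(\sqrt{\kappa}\rho)\,\mathfrak{g}$ (which you spell out in more detail than the paper), the Bochner inequality with the $-2\kappa\Vert du\Vert^4$ target-curvature term, and the cancellation $(1-\varphi\circ u)-(A-\varphi\circ u)=1-A$ producing the leading term, with the remaining cross terms absorbed into the same nonnegative quantity $\mathcal{F}$ as in Lemma \ref{lem:simple}. The only cosmetic caveat is that your first display is an identity only if $\mathcal{F}$ there denotes the last two terms of the expansion; the $\Vert\nabla du\Vert^2$ contribution enters $\mathcal{F}$ only after applying the Bochner bound, as you correctly note at the end.
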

\begin{proof}
By similar computations to the proof of Lemma \ref{lem:simple},
we see
\begin{align*}
(\Delta+\partial_{\tau}) w &=\frac{2g(\nabla w,\nabla (\varphi\circ u))}{A-\varphi \circ u}+\frac{2\Vert du\Vert^2~ (\Delta+\partial_{\tau}) (\varphi \circ u)}{(A-\varphi \circ u)^3}+\frac{(\Delta+\partial_{\tau}) \Vert du\Vert^2}{(A-\varphi \circ u)^2}\\
                &\quad+\frac{2g(\nabla \Vert du\Vert^2,\nabla (\varphi \circ u))}{(A-\varphi \circ u)^3}+\frac{2\Vert \nabla (\varphi \circ u) \Vert^2~\Vert du\Vert^2}{(A-\varphi \circ u)^4}.
\end{align*}
Due to the Hessian comparison,
\begin{equation*}
(\Delta+\partial_{\tau}) (\varphi \circ u)=\sum^{m}_{i=1}\nabla^2 \rho^2(du(e_i),du(e_i))\geq \kappa \cos\sqrt{\kappa}\rho \circ u \Vert du \Vert^2.
\end{equation*}
Furthermore,
Lemma \ref{lem:easy} and $\sec \leq \kappa$ lead us to
\begin{equation*}
(\Delta+\partial_{\tau}) \Vert du\Vert^2 \geq 2\Vert \nabla du\Vert^2+2\sum^{m}_{i=1}\mathfrak{g}(du(\mathcal{R}(e_i)),du(e_i))-2\kappa \Vert du \Vert^4.
\end{equation*}
It holds that
\begin{align*}
(\Delta+\partial_{\tau}) w-\frac{2g(\nabla w,\nabla (\varphi \circ u))}{A-\varphi \circ u}&\geq 2\kappa(A-\varphi \circ u)^2 \left( \frac{\cos \sqrt{\kappa}\rho }{A-\varphi \circ u}-1   \right) w^2\\
      &\quad +\frac{2}{(A-\varphi \circ u)^2}\sum^{m}_{i=1}\mathfrak{g}(du(\mathcal{R}(e_i)),du(e_i))+2\mathcal{F}\\
      &= 2\kappa(1-A)(A-\varphi \circ u)w^2\\
      &\quad +\frac{2}{(A-\varphi \circ u)^2}\sum^{m}_{i=1}\mathfrak{g}(du(\mathcal{R}(e_i)),du(e_i))+2\mathcal{F},
\end{align*}
where
\begin{equation*}
\mathcal{F}:=\frac{\Vert \nabla du\Vert^2}{(A-\varphi \circ u)^2}+\frac{\Vert \nabla (\varphi \circ u) \Vert^2~\Vert du\Vert^2}{(A-\varphi \circ u)^4}+\frac{g(\nabla \Vert du\Vert^2,\nabla (\varphi \circ u))}{(A-\varphi \circ u)^3}.
\end{equation*}
By similar computations to the proof of Lemma \ref{lem:simple},
$\mathcal{F}$ is non-negative.
\end{proof}

\subsection{Cut-off arguments}\label{sec:Cut-off arguments}

We have the following:
\begin{prop}\label{prop:difficult2}
Let $K\geq 0$.
We assume
\begin{equation*}
\mathcal{R}(V)\geq -K\Vert V \Vert^2,\quad \mathcal{D}(V)\geq -2K\left(H+\Vert V \Vert^2 \right),\quad \mathcal{H}(V) \geq -\frac{H}{\tau},\quad H\geq 0
\end{equation*}
for all vector fields $V$.
Let $(N,\mathfrak{g})$ be $\sec \leq \kappa$ for $\kappa>0$.
Assume that $B_{\pi/2\sqrt{\kappa}}(y_0)$ does not meet $\cut(y_0)$.
Let $u:M\times [0,\infty)\to N$ be a solution to backward harmonic map heat flow.
Suppose that the image of $u$ is contained in $B_{\pi/2\sqrt{\kappa}}(y_0)$.
For $R,T>0$,
we define $\varphi$ and $A$ as \eqref{eq:new key function}.
Furthermore,
we define $w$ as \eqref{eq:key function2}.
We also take a function $\psi:[0,\infty)\times [0,\infty)\to [0,1]$ in Lemma \ref{lem:cutoff} with $\alpha=3/4$,
and define $\psi$ as \eqref{eq:cutoff function}.
Then for any $\epsilon>0$, we have
\begin{align*}
2\kappa(1-A)(A-\varphi \circ u)\psi w^2
&\leq \frac{27\epsilon}{4}  \psi w^2+\frac{C^2_{3/4}}{\epsilon}\left(m^2+\frac{9}{4}+9C^2_{3/4}+  \frac{36 \kappa^2 C^2_{3/4}}{\epsilon^2}   \right)\frac{1}{R^4}\\
&\qquad \qquad \,\,\,\, +\frac{C^2}{4\epsilon}\frac{1}{T^2}+\frac{1}{\epsilon}\left(1+\frac{C^2_{3/4}}{4}\right)K^2+\Phi
\end{align*}
at every point in $Q_{R,T}$ such that the reduced distance is smooth,
where the universal constants $C_{3/4},C>0$ are given in Lemma \ref{lem:cutoff},
and put
\begin{equation}\label{eq:dimension epsilon2}
\Phi:=(\Delta+\partial_{\tau})(\psi w)-\frac{2g\left( \nabla\psi, \nabla(\psi w) \right)}{\psi}-\frac{2g(\nabla(\psi w),\nabla (\varphi \circ u))}{A-\varphi \circ u}.
\end{equation}
\end{prop}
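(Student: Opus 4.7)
The plan is to mimic, essentially term-by-term, the cut-off argument used for Proposition \ref{prop:difficult}, but with two crucial replacements: the comparison function $\rho^2 \circ u$ is replaced by $\varphi \circ u$ (with $\varphi = 1 - \cos\sqrt{\kappa}\rho$), and the starting pointwise inequality is the one furnished by Lemma \ref{lem:simple2} rather than Lemma \ref{lem:simple}. Since the $\psi w^2$ coefficient on the left-hand side of the desired inequality is $2\kappa(1-A)(A-\varphi \circ u)$, which we cannot guarantee stays bounded below by anything explicit in $A$ alone, we refrain from absorbing Young errors into the LHS and instead leave $\epsilon$ as a free parameter on the right.

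First, I would apply Lemma \ref{lem:simple2} at a point where $\ell$ is smooth, multiply by $\psi$, and then use the product rule on $(\Delta+\partial_\tau)(\psi w)$ together with the identities
\begin{equation*}
\frac{2g(\nabla\psi,\nabla(\psi w))}{\psi} = 2g(\nabla\psi,\nabla w) + \frac{2w\|\nabla\psi\|^2}{\psi},
\end{equation*}
\begin{equation*}
\frac{2g(\nabla(\psi w),\nabla(\varphi \circ u))}{A-\varphi \circ u} = \frac{2\psi g(\nabla w,\nabla(\varphi \circ u))}{A-\varphi \circ u} + \frac{2w g(\nabla\psi,\nabla(\varphi \circ u))}{A-\varphi \circ u}.
\end{equation*}
The gradient-in-$w$ cross term cancels against the analogous term in $\Phi$, leaving the rearrangement
\begin{equation*}
2\kappa(1-A)(A-\varphi \circ u)\psi w^2 \leq \Phi + \Psi_1 + \Psi_2 + \Psi_3 + \Psi_4,
\end{equation*}
where the $\Psi_i$ are defined exactly as in the proof of Proposition \ref{prop:difficult} except that $\rho^2\circ u$ is replaced by $\varphi \circ u$ and the denominator $A^2-\rho^2\circ u$ by $A-\varphi \circ u$.

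Next, I would estimate $\Psi_1,\Psi_2,\Psi_3$ by repeating verbatim the calculations from the proof of Proposition \ref{prop:difficult}: $\Psi_1$ is handled by the curvature bound $\mathcal{R}(V)\geq -K\|V\|^2$ and Young with $p=q=2$; $\Psi_2$ is handled by invoking Lemmas \ref{lem:reduced heat estimate2} and \ref{lem:reduced gradient estimate} to control $(\Delta+\partial_\tau)\overline{L}$ and $\|\nabla\mathfrak{d}\|^2$, followed by four applications of Young with $p=q=2$ and the cut-off inequalities of Lemma \ref{lem:cutoff}; $\Psi_3$ is handled directly via Young $p=q=2$ and the bound $\|\nabla\psi\|^2/\psi^{3/2}\leq 3C_{3/4}^2/R^2$ that follows from Lemmas \ref{lem:reduced gradient estimate} and \ref{lem:cutoff}. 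The only genuinely new estimate is for $\Psi_4$: the key observation is that, since $\varphi = 1-\cos\sqrt{\kappa}\rho$, we have $\|\nabla(\varphi\circ u)\|\leq \sqrt{\kappa}\sin\sqrt{\kappa}(\rho\circ u)\|du\|\leq \sqrt{\kappa}\|du\|$, so $\Psi_4 \leq 2\sqrt{\kappa}\|\nabla\psi\|w^{3/2}$. Writing $w^{3/2}\|\nabla\psi\| = (\psi w^2)^{3/4}\cdot(\|\nabla\psi\|/\psi^{3/4})$ and applying Young with $p=4/3,\,q=4,\,\epsilon_Y = \epsilon$ produces the terms $(3\epsilon/4)\psi w^2$ and $4\kappa^2\|\nabla\psi\|^4/(\epsilon^3 \psi^3)$; the second is controlled via $\|\nabla\psi\|^4/\psi^3 \leq 9C_{3/4}^4/R^4$ and yields precisely the $36\kappa^2 C_{3/4}^2/\epsilon^2$ factor in the stated bound. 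Summing the $\psi w^2$ coefficients $\epsilon + 4\epsilon + \epsilon + 3\epsilon/4 = 27\epsilon/4$ and collecting the remaining constants reproduces the claimed inequality.

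The only mild subtlety — the closest thing to an obstacle — is picking the Young exponent and parameter in $\Psi_4$ so that the final coefficients line up; in contrast to the proof of Proposition \ref{prop:difficult}, where the choice $\epsilon_Y = 4A^2/3$ was used to absorb $\Psi_4$ into an $A^2\psi w^2$ term, here $A$ can be close to $1$, so we forgo absorption and choose $\epsilon_Y = \epsilon$ uniformly, which is what produces the slightly worse $\epsilon^{-3}$ dependence in the $R^{-4}$-term compared to the $\epsilon^{-1}$ dependence elsewhere. Everything else is a routine transcription of the argument for Proposition \ref{prop:difficult}.
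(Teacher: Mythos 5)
Your proposal is correct and follows essentially the same route as the paper: the same decomposition $2\kappa(1-A)(A-\varphi\circ u)\psi w^2\leq \Psi_1+\Psi_2+\Psi_3+\Psi_4+\Phi$ via Lemma \ref{lem:simple2}, verbatim reuse of the $\Psi_1,\Psi_2,\Psi_3$ bounds from Proposition \ref{prop:difficult}, and the identical treatment of $\Psi_4$ via $\Vert\nabla(\varphi\circ u)\Vert\leq\sqrt{\kappa}\Vert du\Vert$ and Young with $p=4/3$, $q=4$, parameter $\epsilon$, yielding $\tfrac{3\epsilon}{4}\psi w^2+\tfrac{4\kappa^2}{\epsilon^3}\Vert\nabla\psi\Vert^4/\psi^3$. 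Your bookkeeping of the coefficients ($27\epsilon/4$ on $\psi w^2$ and the $\epsilon^{-3}$ factor in the $R^{-4}$ term) matches the paper's, as does your observation that $\epsilon$ must be left free here rather than absorbed as in the non-positively curved case.
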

\begin{proof}
Using Lemma \ref{lem:simple2},
we see
\begin{align*}
\Phi&=\psi \left(\Delta+\partial_{\tau} \right)w-\frac{2\psi g(\nabla w,\nabla (\varphi\circ u))}{A-\varphi \circ u}
+w\left(\Delta+\partial_{\tau}\right)\psi-\frac{2w\Vert \nabla \psi \Vert^2}{\psi}-\frac{2w g(\nabla \psi,\nabla (\varphi \circ u))}{A-\varphi \circ u}\\
    &\geq 2\kappa(1-A)(A-\varphi \circ u)\psi w^2+\frac{2\psi }{(A-\varphi \circ u)^2}\sum^{m}_{i=1}\mathfrak{g}(du(\mathcal{R}(e_i)),du(e_i))
    +w\left(\Delta+\partial_{\tau} \right)\psi\\
    &\qquad \qquad \qquad \qquad \qquad \qquad \,\,~  -\frac{2w\Vert \nabla \psi \Vert^2}{\psi}-2\frac{w g(\nabla \psi,\nabla (\varphi\circ u))}{A-\varphi \circ u}.
\end{align*}
We obtain
\begin{equation*}
2\kappa(1-A)(A-\varphi \circ u)\psi w^2 \leq \Psi_1+\Psi_2+\Psi_3+\Psi_4+\Phi
\end{equation*}
for
\begin{align*}
\Psi_1&:=-\frac{2\psi }{(A-\varphi \circ u)^2}\sum^{m}_{i=1}\mathfrak{g}(du(\mathcal{R}(e_i)),du(e_i)),\quad \Psi_2:=-w \left(\Delta+\partial_{\tau}\right)\psi,\\ \Psi_3&:=\frac{2w\Vert \nabla \psi \Vert^2}{\psi},\quad \Psi_4:=\frac{2w g(\nabla \psi,\nabla (\varphi\circ u))}{A-\varphi \circ u}.
\end{align*}
For $\Psi_1$,
the following holds:
\begin{equation*}
\Psi_1=-\frac{2\psi }{(A-\varphi \circ u)^2}\sum^{m}_{i=1}\mathfrak{g}(du(\mathcal{R}(e_i)),du(e_i)) \leq 2K \psi w\leq \epsilon \psi^2 w^2+\frac{K^2}{\epsilon}\leq \epsilon \psi w^2+\frac{K^2}{\epsilon}
\end{equation*}
in the same manner as in the proof of Proposition \ref{prop:difficult}.
For $\Psi_2,\Psi_3$,
we possess the same upper estimates as in the proof of Proposition \ref{prop:difficult}.
For $\Psi_4$,
the following holds:
\begin{align*}\label{eq:estimate of C2}
\Psi_4&=\frac{2w g(\nabla \psi,\nabla (\varphi \circ u))}{A-\varphi \circ u}\leq \frac{2w\Vert \nabla \psi \Vert \Vert \nabla (\varphi \circ u) \Vert}{A-\varphi \circ u}\leq 2 \sqrt{\kappa} w^{3/2} \Vert \nabla \psi \Vert\\ \notag
                  &\leq \frac{3 \epsilon}{4}\psi w^2+\frac{4\kappa^2}{\epsilon^3}  \frac{\Vert \nabla \psi \Vert^4}{\psi^3}\leq \frac{3 \epsilon}{4}\psi w^2+\frac{36\kappa^2C^4_{3/4}}{\epsilon^3} \,\frac{1}{R^4}.
\end{align*}
This proves the desired estimate.
\end{proof}

\subsection{Proof of Theorems \ref{thm:main2} and \ref{thm:gradient2}}\label{sec:Proof of main theorems}

We are now in a position to prove Theorem \ref{thm:gradient2}.
\begin{proof}[Proof of Theorem \ref{thm:gradient2}]
For $K\geq 0$,
let $(M,g(\tau))_{\tau \in [0,\infty)}$ be backward $(-K)$-super Ricci flow satisfying \eqref{eq:Kmain assumption2} for all vector fields $V$.
Let $(N,\mathfrak{g})$ be $\sec \leq \kappa$ for $\kappa>0$.
Assume that $B_{\pi/2\sqrt{\kappa}}(y_0)$ does not meet $\cut(y_0)$.
Let $u:M\times [0,\infty)\to N$ be a solution to backward harmonic map heat flow.
Suppose that the image of $u$ is contained in $B_{\pi/2\sqrt{\kappa}}(y_0)$.
For $R,T>0$,
we define $\varphi$ and $A$ as \eqref{eq:new key function}.
Furthermore,
we define $w$ as \eqref{eq:key function2}.
Also,
we define $\psi$ as in Proposition \ref{prop:difficult2}.
For $\theta>0$
we define $Q_{R,T,\theta}$ as \eqref{eq:compact}.
For a fixed small $\theta\in (0,T/4)$,
we take a maximum point $(\overline{x},\overline{\tau})$ of $\psi w$ in $Q_{R,T,\theta}$.
We may assume that
the reduced distance is smooth at $(\overline{x},\overline{\tau})$.

We set
\begin{equation*}
\delta:=(1-A)(A-\varphi(u(\overline{x},\overline{\tau}))).
\end{equation*}
Notice that
\begin{equation*}
\frac{1}{1-A}=2\sup_{Q_{R,T}}\frac{1}{\cos\sqrt{\kappa}\rho \circ u},\quad \frac{1}{A-\varphi(u(\overline{x},\overline{\tau}))}\leq \frac{2}{1-\sup_{Q_{R,T}  } \varphi \circ u}=2\sup_{Q_{R,T}}\frac{1}{\cos\sqrt{\kappa}\rho \circ u};
\end{equation*}
in particular,
\begin{equation*}
\frac{1}{\delta}\leq 4\sup_{Q_{R,T}}\left(\frac{1}{\cos\sqrt{\kappa}\rho \circ u}\right)^2.
\end{equation*}
Letting $\epsilon\to 4\kappa \delta/27$ in Proposition \ref{prop:difficult2}, and $\Phi(\overline{x},\overline{\tau})\leq 0$ tell us that
\begin{equation*}
\kappa \delta \psi w^2\leq \frac{27C^2_{3/4}}{4\kappa \delta}\left(m^2+\frac{9}{4}+9C^2_{3/4}+  \frac{6561 C^2_{3/4}}{4\delta^2}   \right)\frac{1}{R^4}+\frac{27C^2}{16\kappa \delta}\frac{1}{T^2}+\frac{27}{4\kappa \delta}\left(1+\frac{C^2_{3/4}}{4}\right)K^2
\end{equation*}
at $(\overline{x},\overline{\tau})$,
where $\Phi$ is defined as \eqref{eq:dimension epsilon2}.
It follows that
\begin{align*}
(\psi w)^2(\overline{x},\overline{\tau})
&\leq \frac{27C^2_{3/4}}{4\kappa^2 \delta^2}\left(m^2+\frac{9}{4}+9C^2_{3/4}+  \frac{6561 C^2_{3/4}}{4\delta^2}   \right)\frac{1}{R^4}\\
&\quad +\frac{27C^2}{16\kappa^2 \delta^2}\frac{1}{T^2}+\frac{27}{4\kappa^2 \delta^2}\left(1+\frac{C^2_{3/4}}{4}\right)K^2.
\end{align*}
Since $\delta \in (0,1)$,
there is a positive constant $\overline{c}_m>0$ depending only on $m$ such that
\begin{equation*}
(\psi w)^2(\overline{x},\overline{\tau})\leq \frac{\overline{c}_m}{\kappa^2 \delta^4} \left(\frac{1}{R^4}+\frac{1}{T^2}+K^2 \right).
\end{equation*}
Thus,
\begin{equation*}\label{eq:smooth conclusion}
(\psi w)(x,\tau) \leq (\psi w)(\overline{x},\overline{\tau})\leq \frac{\overline{c}^{1/2}_{m}}{\kappa \delta^2} \left(\frac{1}{R^2}+\frac{1}{T}+K\right)
\end{equation*}
for all $(x,\tau)\in Q_{R,T,\theta}$.
By $\psi \equiv 1$ on $Q_{R/2,T/4,\theta}$,
\begin{equation*}
\frac{\Vert d u \Vert}{A-\varphi \circ u} \leq \frac{\overline{c}^{1/4}_{m}}{\sqrt{\kappa} \delta} \left(\frac{1}{R}+\frac{1}{\sqrt{T}}+\sqrt{K}\right)
\leq \frac{4\overline{c}^{1/4}_{m}}{\sqrt{\kappa}} \left(\frac{1}{R}+\frac{1}{\sqrt{T}}+\sqrt{K}\right)\sup_{Q_{R,T}}\left(\frac{1}{\cos\sqrt{\kappa}\rho \circ u}\right)^2
\end{equation*}
on $Q_{R/2,T/4,\theta}$.
Letting $\theta\to 0$,
we complete the proof of Theorem \ref{thm:gradient2}.
\end{proof}

Let us conclude Theorem \ref{thm:main2}.
\begin{proof}[Proof of Theorem \ref{thm:main2}]
Let $(M,g(\tau))_{\tau \in [0,\infty)}$ be backward super Ricci flow satisfying \eqref{eq:main assumption2} for all vector fields $V$.
Let $(N,\mathfrak{g})$ be $\sec \leq \kappa$ for $\kappa>0$.
Assume that $B_{\pi/2\sqrt{\kappa}}(y_0)$ does not meet $\cut(y_0)$.
Let $u:M\times [0,\infty)\to N$ be a solution to backward harmonic map heat flow.
Suppose that the image of $u$ is contained in $B_{\pi/2\sqrt{\kappa}}(y_0)$.
For $R>0$
we put 
\begin{equation*}
\mathcal{A}_{R}:=\sup_{Q_{R,R^2}}\left(\frac{1}{\cos\sqrt{\kappa}\rho \circ u}\right)^2.
\end{equation*}
The growth condition says that $\mathcal{A}_{R}=o(R)$ as $R\to \infty$.
We fix $(x,\tau)\in M \times (0,\infty)$,
and a sufficiently large $R>0$.
Thanks to Theorem \ref{thm:gradient2} with $K=0$,
\begin{equation*}
\frac{\Vert du \Vert}{A}\leq \frac{\Vert d u \Vert}{A-\varphi \circ u} \leq \frac{2C_m\mathcal{A}_R}{ R}
\end{equation*}
at $(x,\tau)$,
where $\varphi$ and $A$ are defined as \eqref{eq:new key function}.
Notice that $A\leq 1$.
Thus,
by letting $R\to \infty$,
we complete the proof of Theorem \ref{thm:main2}.
\end{proof}

Similarly to Corollary \ref{cor:cor1},
we obtain the following:
\begin{cor}\label{cor:cor2}
Let $(M,g(\tau))_{\tau \in [0,\infty)}$ be a complete backward Ricci flow with bounded, non-negative curvature operator.
Let $(N,\mathfrak{g})$ be a complete Riemannian manifold with $\sec \leq \kappa$ for $\kappa>0$.
Assume that $B_{\pi/2\sqrt{\kappa}}(y_0)$ does not meet $\cut(y_0)$.
Let $u:M\times [0,\infty)\to N$ be a solution to backward harmonic map heat flow.
If the image of $u$ is contained in $B_{\pi/2\sqrt{\kappa}}(y_0)$,
and if $u$ satisfies a growth condition
\begin{equation*}
\frac{1}{\cos \sqrt{\kappa}\rho(u(x,\tau))}=o\left(\mathfrak{d}(x,\tau)^{1/2}+\tau^{1/4}\right)
\end{equation*}
near infinity, then $u$ is constant.
\end{cor}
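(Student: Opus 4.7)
The strategy is entirely parallel to that of Corollary \ref{cor:cor1}: reduce to Theorem \ref{thm:main2} by verifying the structural hypotheses \eqref{eq:main assumption2} directly from the backward Ricci flow assumption and the bounded, non-negative curvature operator, and then invoke the theorem. The proof will consist of bookkeeping rather than new analytic input, since all the work has been done in proving Theorem \ref{thm:main2}.

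First I would unpack the Ricci flow hypothesis. Along a backward Ricci flow $\partial_\tau g = 2\ric$, so $h = \frac{1}{2}\partial_\tau g = \ric$; thus the backward super Ricci flow inequality $\ric \geq h$ holds with equality, and $H = \tr h = \Sc \geq 0$ because a non-negative curvature operator forces $\Sc \geq 0$. Bounded curvature operator yields bounded $\ric$, hence bounded $h$, which in turn supplies the admissibility required to make $L$-distance and the reduced distance $\ell$ well-behaved.

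Next I would verify the two Harnack-type inequalities in \eqref{eq:main assumption2}, namely $\mathcal{D}(V)\geq 0$ and $\mathcal{H}(V) \geq -H/\tau$ for every vector field $V$. Along a Ricci flow these are precisely Hamilton's matrix and trace Harnack inequalities under the assumption of bounded, non-negative curvature operator, i.e.\ \cite[Corollary 1.2]{H2}. Exactly the same reduction was used to pass from Theorem \ref{thm:main result in KuS} to \cite[Corollary 2.5]{KuS}, which in turn underlies Corollary \ref{cor:cor1}; I would simply import that verification verbatim.

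With admissibility, $H\geq 0$, and both Harnack conditions in hand, all hypotheses of Theorem \ref{thm:main2} are met, and one applies Theorem \ref{thm:main2} to the given $u:M\times[0,\infty)\to N$ under the prescribed growth condition to conclude that $u$ must be constant. There is no real obstacle here: the only subtlety is citing Hamilton's Harnack in the sharp form used to establish \eqref{eq:main assumption2}, and this is already packaged in \cite[Corollary 1.2]{H2} and mirrored in \cite[Corollary 2.5]{KuS}.
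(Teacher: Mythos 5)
Your proposal is correct and is essentially the paper's own argument: the paper derives Corollary \ref{cor:cor2} exactly by noting that for a backward Ricci flow with bounded, non-negative curvature operator one has $h=\ric$, $H=\Sc\geq 0$, admissibility, $\mathcal{D}(V)=0$, and $\mathcal{H}(V)\geq -H/\tau$ via Hamilton's trace Harnack inequality, and then applying Theorem \ref{thm:main2}. No discrepancies to report.
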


\section{Proof of Theorem \ref{thm:improve} and Schoen-Uhlenbeck's example} 

Finally, we prove Theorem \ref{thm:improve} and compare the result with Schoen-Uhlenbeck's example. 

\subsection{Proof of Theorem \ref{thm:improve}}

In this subsection, let $(M, g)$ be an $m$-dimensional complete Riemannian manifold of non-negative Ricci curvature,
and let $(N, \mathfrak{g})$ be an $n$-dimensional complete Riemannian manifold with $\sec\leq \kappa$ for $\kappa>0$.
For harmonic maps,
we can use the following refined Kato inequality: 
\begin{lem}\label{lem:ref-kato}
For a harmonic map $u: M \to N$, we have 
\begin{equation}\label{eq:ref-kato}
\Vert\nabla \Vert du\Vert\Vert^2\leq \frac{m-1}{m}\Vert\nabla du\Vert^2.   
\end{equation}
\end{lem}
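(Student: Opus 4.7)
The plan is to establish the inequality pointwise. Fix $p\in M$; the cases $\Vert du\Vert(p)=0$ and $\nabla\Vert du\Vert(p)=0$ are immediate, so I assume both quantities are nonzero at $p$. I would choose an orthonormal frame $\{e_i\}_{i=1}^m$ of $T_pM$ with $e_1$ parallel to $\nabla\Vert du\Vert$, together with an orthonormal frame $\{f_\alpha\}$ of $T_{u(p)}N$. Writing $U_i^\alpha$ for the components of $du$ and $T_{ij}^\alpha$ for those of $\nabla du$, the two structural facts to exploit are the symmetry $T_{ij}^\alpha=T_{ji}^\alpha$ of the Hessian of a map and the harmonicity $\sum_i T_{ii}^\alpha=0$.

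Next I would compute $\partial_k\Vert du\Vert=\Vert du\Vert^{-1}\sum_{i,\alpha}T_{ki}^\alpha U_i^\alpha$. By the choice of $e_1$, $\Vert\nabla\Vert du\Vert\Vert^2=(\partial_1\Vert du\Vert)^2$, and a single application of Cauchy--Schwarz in the double-sum over $(i,\alpha)$ yields
$$\Vert\nabla\Vert du\Vert\Vert^2\leq \sum_{i,\alpha}(T_{1i}^\alpha)^2=\Vert\nabla_{e_1}du\Vert^2.$$
The problem is thereby reduced to the linear-algebraic estimate $\Vert\nabla_{e_1}du\Vert^2\leq \frac{m-1}{m}\Vert\nabla du\Vert^2$ for a tensor $T$ that is symmetric and trace-free in its first two slots.

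For the remaining step, which is where harmonicity enters decisively, I would decompose
$$\Vert\nabla du\Vert^2=X+2Y+Z+W,\qquad X:=\sum_\alpha(T_{11}^\alpha)^2,\quad Y:=\sum_{i\geq 2,\alpha}(T_{1i}^\alpha)^2,$$
with $Z:=\sum_{i\geq 2,\alpha}(T_{ii}^\alpha)^2$ and $W:=\sum_{k,i\geq 2,\,k\neq i,\,\alpha}(T_{ki}^\alpha)^2$; the factor $2$ on $Y$ reflects the symmetry $T_{1i}^\alpha=T_{i1}^\alpha$. Since $\Vert\nabla_{e_1}du\Vert^2=X+Y$, the target inequality is equivalent to $X\leq (m-2)Y+(m-1)(Z+W)$. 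Harmonicity gives $T_{11}^\alpha=-\sum_{i\geq 2}T_{ii}^\alpha$, so Cauchy--Schwarz yields $(T_{11}^\alpha)^2\leq (m-1)\sum_{i\geq 2}(T_{ii}^\alpha)^2$; summing over $\alpha$ produces $X\leq (m-1)Z\leq (m-1)(Z+W)$, and the estimate follows since $(m-2)Y\geq 0$ for $m\geq 2$ (the case $m=1$ is vacuous).

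The argument involves no real analytic obstacle: the whole proof is pointwise linear algebra, powered by the single scalar constraint coming from $\Delta u=0$. The main bookkeeping subtlety is that each off-diagonal entry $T_{1i}^\alpha$ with $i\geq 2$ contributes twice to $\Vert\nabla du\Vert^2$ but only once to $\Vert\nabla_{e_1}du\Vert^2$; it is this double-counting, combined with the harmonicity trade-off that exchanges $T_{11}^\alpha$ for the remaining diagonal entries via Cauchy--Schwarz, that improves the naive Kato constant $1$ to $\tfrac{m-1}{m}$.
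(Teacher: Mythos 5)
Your proof is correct, and it takes a genuinely different (and somewhat leaner) route than the paper's. The paper, following Lin--Wang, works component-by-component in the target: for each $\alpha$ it diagonalizes the symmetric matrix $(u^\alpha_{ij})$, uses the trace-free condition to show $\Vert\nabla^2u^\alpha\Vert^2\geq\frac{m}{m-1}|\lambda^\alpha_m|^2$, bounds $\Vert(u^\alpha_{ij})\nabla u^\alpha\Vert$ by the top eigenvalue, and then reassembles the estimate over $\alpha$ using the Cauchy--Schwarz and Minkowski inequalities. You instead adapt the frame on the source so that $e_1$ is parallel to $\nabla\Vert du\Vert$, reduce by a single Cauchy--Schwarz to the row quantity $\Vert\nabla_{e_1}du\Vert^2=X+Y$, and extract the constant from the block decomposition $\Vert\nabla du\Vert^2=X+2Y+Z+W$ together with the diagonal trade-off $X\leq(m-1)Z$ coming from $\sum_iT^\alpha_{ii}=0$; your algebra checks out (the target is indeed equivalent to $X\leq(m-2)Y+(m-1)(Z+W)$). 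What your route buys is the elimination of the $n$ separate eigenvalue decompositions and the Minkowski step, and a transparent accounting of where the gain over the naive Kato constant $1$ comes from: the double-counting of the off-diagonal entries $T^\alpha_{1i}$ plus the exchange of $(T^\alpha_{11})^2$ for the remaining diagonal entries. What the paper's route buys is that the key scalar inequality for a single trace-free symmetric matrix is stated cleanly on its own and is reusable; the price is the extra interpolation needed to sum over $\alpha$. Both arguments are purely pointwise, handle the set where $\Vert du\Vert=0$ in the same way, and use harmonicity only through the trace-freeness of $\nabla du$.
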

We can find the proof of this inequality for a harmonic map between spheres in the paper by Lin-Wang \cite{LiWa}. However, their computation is pointwise and only uses properties of harmonic maps. Therefore it is also valid for harmonic maps between general Riemannian manifolds. Here, we give a proof for readers' convenience. 
\begin{proof}
It is enough to show the inequality at $x_0\in M$ such that $\Vert du \Vert(x_0)\neq 0$. Let us fix such a point. We compute in normal coordinates $(x^i)=(x^1, \dots, x^m)$ around $x_0\in M$ and $(y^\alpha)=(y^1, \dots, y^n)$ around $u(x_0)\in N$. Let $u(x)=(u^1(x^1, \dots, x^m), \dots, u^n(x^1, \dots, x^m))$ be the local expression for $u:M\to N$ in these coordinates. We use the notations 	
\begin{equation*}
	u^\alpha_i=\frac{\partial u^\alpha}{\partial x^i}, \quad \text{and} \quad u^\alpha_{ij}=\frac{\partial^2u^\alpha}{\partial x^i\partial x^j}. 
\end{equation*}
Now we can write   
\begin{equation*}
	\Vert \nabla du \Vert^2(x_0)=\sum_{\alpha=1}^n \sum_{i,j=1}^m (u_{ij}^\alpha)^2(x_0) 
\end{equation*}
at $x_0\in M$. For any $1\leq \alpha\leq n$, let $\lambda^\alpha_1, \dots, \lambda^\alpha_m$ be real eigenvalues of the symmetric matrix $(u^\alpha_{ij}(x_0))$ such that $|\lambda^\alpha_1|\leq \cdots \leq |\lambda^\alpha_m|$. Then we have 
\begin{equation*}
	\Vert \nabla du \Vert^2(x_0)=\sum_{\alpha=1}^n \sum_{i=1}^m(\lambda^\alpha_i)^2. 
\end{equation*}
On the other hand, since $u:M\to N$ is a harmonic map, we have 
\begin{equation*}
	\sum_{i=1}^m u_{ii}^\alpha(x_0)=\sum_{i=1}^m \lambda_i^\alpha=0 \quad \text{for all} \quad 1\leq \alpha \leq n. 
\end{equation*} 
Using this, elementary computation yields 
\begin{equation*}
	\sum_{i=1}^{m-1}(\lambda_i^\alpha)^2\geq \frac{1}{m-1}\left(\sum_{i=1}^{m-1}\lambda_i^\alpha \right)^2=\frac{1}{m-1}(\lambda^{\alpha}_m)^2 \quad \text{for all} \quad 1\leq \alpha \leq n. 
\end{equation*}
Adding $(\lambda^\alpha_m)^2$ to the both sides of this inequality, we have 
\begin{equation*}
\Vert \nabla^2u^\alpha \Vert^2(x_0)=\sum_{i=1}^{m}(\lambda_i^\alpha)^2\geq \frac{m}{m-1}(\lambda^\alpha_m)^2 \quad \text{for all} \quad 1\leq \alpha \leq n. 
\end{equation*}
For a general $m\times m$ symmetric matrix $A$ which has real eigenvalues $\lambda_1, \dots, \lambda_m$ with $|\lambda_1|\leq \cdots \leq |\lambda_m|$, and a vector $v\in \mathbb{R}^m$, it holds that 
\begin{equation*}
	\Vert Av\Vert^2\leq |\lambda_m|^2 \Vert v\Vert^2.  
\end{equation*}
In our case, for each $1\leq \alpha \leq n$, put $v=\nabla u^\alpha(x_0)$ and $A=(u^\alpha_{ij}(x_0))$, then we have  
\begin{align*}
\Vert \nabla u^\alpha \Vert^2(x_0)\Vert \nabla^2 u^\alpha \Vert^2(x_0) &= \Vert  \nabla u^\alpha \Vert^2(x_0) \sum_{i=1}^m(\lambda^\alpha_i)^2\\ 
&\geq \frac{m}{m-1}\Vert \nabla u^\alpha \Vert^2(x_0)|\lambda^\alpha_m|^2\\
&\geq \frac{m}{m-1}\sum_{i=1}^m\left(\sum_{j=1}^m u_{ij}^\alpha(x_0) u_j^\alpha(x_0)\right)^2. 
\end{align*}
Therefore, using the Cauchy-Schwarz inequality and the Minkowski inequality, we have 
\begin{align*}
\Vert du \Vert^2(x_0) \Vert \nabla du \Vert^2(x_0)&=\left(\sum_{\alpha=1}^n \Vert \nabla u^\alpha \Vert^2(x_0)\right)\left(\sum_{\alpha=1}^n\Vert\nabla^2 u^\alpha \Vert^2(x_0)\right)\\
&\geq \left(\sum_{\alpha=1}^n\Vert\nabla u^\alpha\Vert(x_0)\Vert \nabla^2 u^\alpha \Vert(x_0) \right)^2\\
&\geq \frac{m}{m-1}\left[ \sum_{\alpha=1}^n\left\{\sum_{i=1}^m\left(\sum_{j=1}^m u_{ij}^\alpha(x_0) u_j^\alpha(x_0)\right)^2 \right\}^{\frac{1}{2}}\right]^2\\
&\geq \frac{m}{m-1}\sum_{i=1}^m\left(\sum_{\alpha=1}^n\sum_{j=1}^m u_{ij}^\alpha(x_0) u_j^\alpha(x_0)\right)^2. 
\end{align*}
Note that 
\begin{equation*}
4\Vert du \Vert^2 \Vert \nabla\Vert du\Vert \Vert^2 = \Vert \nabla \Vert du \Vert^2 \Vert^2=4\sum_{i=1}^m\left(\sum_{\alpha=1}^n\sum_{j=1}^m u_{ij}^\alpha u_j^\alpha\right)^2. 
\end{equation*}
Hence we obtain 
\begin{equation*}
\Vert du \Vert^2(x_0) \Vert \nabla du \Vert^2(x_0)\geq \frac{m}{m-1}\Vert du \Vert^2(x_0) \Vert \nabla\Vert du\Vert \Vert^2(x_0). 
\end{equation*}
This completes the proof of Lemma \ref{lem:ref-kato}. 
\end{proof}

Now we are in a position to prove Theorem \ref{thm:improve}. We use the technique for minimal hypersurfaces developed by Ecker-Huisken in \cite{EH}.  
\begin{proof}[Proof of Theorem \ref{thm:improve}]
The Bochner formula for a harmonic map $u:M\to N$ (i.e., harmonic map version of Lemma \ref{lem:easy}) combined with the refined Kato inequality \eqref{eq:ref-kato} and curvature assumptions on $M, N$ tells us that 
\begin{equation}\label{eq:bochner-kato}
\Delta\Vert du\Vert^2\geq -2\kappa \Vert du\Vert^4+2\Vert\nabla du\Vert^2 \geq -2\kappa \Vert du\Vert^4+\frac{2m}{m-1}\Vert\nabla \Vert du\Vert\Vert^2. 
\end{equation}
Let $v(x)=1/\cos\sqrt{\kappa}\rho(u(x))$. Note that this is well-defined when $u(M)\subset B_{\pi/2\sqrt{\kappa}}(y_0)$. The Hessian comparison theorem under the curvature assumption on $N$ implies 
\begin{equation}\label{eq:lap-v}
	\Delta v=v^2\Delta(\varphi\circ u)+\frac{2\Vert\nabla v\Vert^2}{v}\geq \kappa \Vert du\Vert^2v+\frac{2\Vert\nabla v\Vert^2}{v}, 
\end{equation}
where $\varphi=1-\cos\sqrt \kappa\rho$. Using \eqref{eq:bochner-kato} and \eqref{eq:lap-v}, a direct computation yields 
\begin{align*}
\Delta(\Vert du\Vert^p v^q)&\geq \kappa(q-p)\Vert du\Vert^{p+2}v^q\\
	&+p\left(p-1+\frac{1}{m-1}\right)\Vert du\Vert^{p-2}v^q\Vert\nabla\Vert du\Vert\Vert^2+q(q+1)\Vert du\Vert^p v^{q-2}\Vert\nabla v\Vert^2\\&+2pq\Vert du\Vert^{p-1}v^{q-1}g(\nabla\Vert du\Vert, \nabla v), 
\end{align*}
where $p, q$ are determined later. 
Using the Cauchy-Schwarz inequality and the Young inequality (with $\varepsilon>0$) for the last term, we have 
\begin{align}\label{eq:main-ineq}
\Delta(\Vert du\Vert^p v^q)&\geq \kappa(q-p)\Vert du\Vert^{p+2}v^q \\
	&\quad +p\left(p-1+\frac{1}{m-1}-\varepsilon q \right)\Vert du\Vert^{p-2}v^q
	\Vert \nabla\Vert du\Vert \Vert^2  \nonumber \\
	&\quad +q\left(q+1-\varepsilon^{-1}p\right)\Vert du\Vert^p v^{q-2}\Vert\nabla v\Vert^2. \nonumber
\end{align}
For given $m\geq 2$, let $q=p>2m-3$ and $\varepsilon=q/(q+1)$. Then we have 
\begin{equation*}
\Delta(\Vert du\Vert^q v^q)\geq 0,  
\end{equation*}
i.e., $\Vert du\Vert^q v^q$ is a subharmonic function on $M$. Therefore, we can use Li-Schoen's mean value inequality (see e.g., \cite[Theorem 7.2]{PLi}) to conclude 
\begin{equation}\label{eq:mean value ineq}
\sup_{B_{R/4}(x_0)}\Vert du\Vert^{2q} v^{2q}\leq \frac{C_m}{\vol({B_{R}(x_0))}}\int_{B_{R}(x_0)}\Vert du\Vert^{2q}v^{2q},  
\end{equation} 
where $C_m$ is a positive constant depending only on $m$.

On the other hand, choosing $q=p+1>2m-3$ and $\varepsilon=(q-1)/(q+1)$ in \eqref{eq:main-ineq}, we have  
\begin{equation*}
\Delta(\Vert du\Vert^{q-1}v^q) \geq \kappa \Vert du\Vert^{q+1}v^q. 
\end{equation*}
We multiply this by $\Vert du\Vert^{q-1}v^q\eta^{2q}$, where $\eta$ is a test function on $M$ with compact support, and then integrating by parts with the Cauchy-Schwarz inequality and the Young inequality yields 
\begin{align*}
\kappa\int_{M}\Vert du\Vert^{2q}v^{2q}\eta^{2q}\leq \frac{1}{2}\int_{M}\Vert du\Vert^{2q-2}v^{2q}\eta^{2q-2}\Vert\nabla \eta\Vert^2.  
\end{align*}
Recall the generalized Young inequality for $a, b\geq 0$ with arbitrary $\varepsilon>0$:
\begin{equation*}
ab\leq \varepsilon\left(\frac{q-1}{q}\right)a^{q/(q-1)}+\frac{\varepsilon^{-(q-1)}}{q}b^q.  
\end{equation*}
Putting $a=\Vert du \Vert^{2q-2}\eta^{2q-2}$ and $b=\Vert \nabla\eta\Vert^2$ in this inequality, we have 
\begin{equation*}
\left(\kappa-\frac{\varepsilon(q-1)}{2q}\right)\int_{M}\Vert du\Vert^{2q}v^{2q}\eta^{2q}\leq \frac{\varepsilon^{-(q-1)}}{2q}\int_{M}v^{2q}\Vert \nabla \eta\Vert^{2q}. 
\end{equation*}
We take $\varepsilon=\kappa q/(q-1)$ to get 
\begin{equation*}
	\int_{M}\Vert du \Vert^{2q}v^{2q}\eta^{2q}\leq \frac{(q-1)^{q-1}}{\kappa^q q^q}\int_{M}v^{2q}\Vert \nabla \eta\Vert^{2q}. 
\end{equation*} 
Choosing $\eta$ as the standard cut-off function in this inequality, we obtain 
\begin{align}\label{eq:ineq-cutoff}
\int_{B_{R}(x_0)}\Vert du\Vert^{2q}v^{2q}&\leq \frac{(q-1)^{q-1}}{\kappa^q q^q R^{2q}} \int_{B_{2R}(x_0)}v^{2q} \\ \notag
&\leq \frac{(q-1)^{q-1}}{\kappa^q q^q R^{2q}}\vol(B_{2R}(x_0))\sup_{B_{2R}(x_0)}v^{2q}.
\end{align} 
Combining \eqref{eq:mean value ineq}, \eqref{eq:ineq-cutoff} and the Bishop-Gromov volume comparison, it follows that \begin{equation*}
\sup_{B_{R/4}(x_0)}\Vert du\Vert v\leq \left(\frac{C_m(q-1)^{q-1}}{\kappa^q q^q R^{2q}}\right)^{1/2q} \left( \frac{\vol(B_{2R}(x_0))}{\vol(B_{R}(x_0))} \right)^{1/2q}\sup_{B_{2R}(x_0)}v\leq  C_{m, \kappa, q}\frac{o(R)}{R}, 
\end{equation*}
where $C_{m,\kappa,q}$ is a positive constant depending only on $m,\,\kappa$ and $q$.
We here notice that $q$ depends only on $m$. 
Letting $R\to \infty$, we complete the proof of Theorem \ref{thm:improve}. 
\end{proof}

\subsection{Schoen-Uhlenbeck's radial solution}\label{subsec:SU-example}
We examine our growth condition \eqref{eq:improve} in Theorem \ref{thm:improve} by comparing with the known example.
In \cite[Example 2.2, Corollary 2.6]{SU}, Schoen-Uhlenbeck showed that a smooth harmonic map $u:\mathbb{R}^m \to \mathbb{S}^n_+$ is necessarily constant for $m\leq 6$, and for $m\geq 7$ such a map exists as a radial solution. 

Now we consider a radial solution, that is, a harmonic map $u:\mathbb{R}^m \to \mathbb{S}^m\subset \mathbb{R}^{m+1}$ of the form $u(r, \theta)=(\rho(r), \theta)$, where $(r, \theta)=(d(x), \theta)$ are polar coordinates in $\mathbb{R}^m$ and $(\rho, \theta)$ are polar coordinates in $\mathbb{S}^{m}$ centered at the north pole. Then the harmonic map equation can be reduced to the following second order nonlinear ODE of $\rho(r)$: 
\begin{equation}\label{eq:ode1}
\frac{d^2\rho}{dr^2}+\frac{m-1}{r}\frac{d\rho}{dr}-\frac{m-1}{2r^2}\sin(2\rho)=0
\end{equation}
for $0<r<\infty$ with initial conditions 
\begin{equation*}
\lim_{r\to 0}\rho(r)=0, \quad \lim_{r\to 0}\frac{d\rho}{dr}(r)>0. 
\end{equation*}
According to Schoen-Uhlenbeck \cite{SU}, if {$m\geq 7$}, $\rho(r)$ lies below the line $\rho=\pi/2$, is increasing and asymptotic to $\pi/2$. As a consequence, we have  
\begin{equation*}
\frac{1}{\cos(\rho(r))} \to \infty 
\end{equation*}
as $r\to \infty$. Now we want to know the precise growth order of this near infinity. Following Schoen-Uhlenbeck \cite{SU}, it is convenient to make the change of variables 
\begin{equation*}
\alpha=2\rho, \quad t=\log r. 
\end{equation*}
Then ODE \eqref{eq:ode1} becomes 
\begin{equation}\label{eq:ode2}
\frac{d^2\alpha}{dt^2}+(m-2)\frac{d\alpha}{dt}-(m-1)\sin \alpha=0 
\end{equation}
for $-\infty<t<\infty$ with 
\begin{equation*}
\lim_{t\to -\infty}\alpha(t)=0, \quad \lim_{t\to -\infty}\frac{d\alpha}{dt}(t)=0. 
\end{equation*}
This is the nonlinear damped pendulum differential equation. Introducing $\beta=d\alpha/dt$ we get the first order autonomous system 
\begin{equation}\label{eq:ode3}
\frac{d\alpha}{dt}=\beta, \quad \frac{d\beta}{dt}=(2-m)\beta+(m-1)\sin \alpha. 
\end{equation}
A standard way to analyze the behavior of nonlinear ODE near a critical point is to study the linearized equation at the point. In our case, we consider the linearization of the system \eqref{eq:ode3} at the critical point $(\alpha, \beta)=(\pi, 0)$: 
\begin{equation*}
\frac{d\tilde\alpha}{dt}=\tilde\beta, \quad \frac{d\tilde\beta}{dt}=(2-m)\tilde\beta+(m-1)(\pi-\tilde\alpha). 
\end{equation*}
or equivalently, 
\begin{equation}\label{eq:ode4}
\frac{d^2\tilde\alpha}{dt^2}+(m-2)\frac{d\tilde\alpha}{dt}+(m-1)\tilde\alpha=(m-1)\pi. 
\end{equation}
The characteristic equation is $\lambda^2+(m-2)\lambda+(m-1)=0$ and its roots are 
\begin{equation*}
\lambda_1(m)=\frac{-(m-2)+ \sqrt{m^2-8m+8}}{2}, \quad \lambda_2(m)=\frac{-(m-2)- \sqrt{m^2-8m+8}}{2}. 
\end{equation*}
If $m\geq 7$, $\lambda_1(m)$ and $\lambda_2(m)$ are both negative real. In this case, it is known that the corresponding critical point $(\pi, 0)$ of the original nonlinear autonomous system \eqref{eq:ode2} is asymptotically stable node. The general solution of the linearized ODE \eqref{eq:ode4} is given by 
\begin{equation*}
\tilde\alpha(t)=\pi+C_1e^{\lambda_1(m)t}+C_2e^{\lambda_2(m)t},  
\end{equation*}
where $C_1$ and $C_2$ are arbitrary constants. Putting $\tilde\rho=2\tilde\alpha$ and $t=\log{r}$, we have 
\begin{equation*}
\frac{\pi}{2}-\tilde\rho(r)=C_1r^{-N_1}+C_2r^{-N_2}=\frac{C_1r^{N_2-N_1}+C_2}{r^{N_2}}, 
\end{equation*}
where $0<N_1:=-\lambda_1(m)<-\lambda_2(m)=:N_2$. Therefore, 
\begin{equation}\label{SU-order1}
\frac{1}{\cos(\tilde\rho(r))}=\frac{1}{\sin\left(\frac{\pi}{2}-\tilde\rho(r)\right)}\sim\frac{r^{N_1}}{C_1}-\frac{C_2 r^{N_1}}{C_1(C_1 r^{N_2-N_1}+C_2)}  
\end{equation}
as $r\to \infty$. Note that 
\begin{equation}\label{SU-order2}
N_1(m)\searrow 1 \quad \text{and} \quad N_2(m) \nearrow \infty \quad \text{as} \quad m \to \infty. 
\end{equation}
Since the solution $\rho(r)$ of the original nonlinear ODE \eqref{eq:ode1} is approximated by the linearized one $\tilde{\rho}(r)$ as $r\to \infty$, $\rho(r)$ does not satisfy the growth condition \eqref{eq:improve} in our Liouville theorem. In addition, \eqref{SU-order1} and \eqref{SU-order2} tell us that our growth condition \eqref{eq:improve} is almost sharp. 


\subsection*{{\rm Acknowledgements}} 
The authors wish to express their gratitude to Toru Kajigaya for letting us know the example by Schoen-Uhlenbeck. The first author was supported by JSPS KAKENHI (JP19K14521). 
The second author was supported by JSPS Grant-in-Aid for Scientific Research on Innovative Areas ``Discrete Geometric Analysis for Materials Design" (17H06460).


\end{document}